\documentclass[12pt]{article}
\usepackage{amsthm,amssymb,latexsym,amsmath,subfigure}
\usepackage{graphicx,color,xcolor}
\usepackage{authblk}
\usepackage{blindtext}
\usepackage{hyperref}

\newtheorem{theorem}{Theorem}
\newtheorem{proposition}{Proposition}
\newtheorem{corollary}{Corollary}
\newtheorem{definition}{Definition}

\newtheorem{remark}{Remark}

\newtheorem{lemma}{Lemma}

\newcommand{\R}{{\mathbb{R}}}
\makeatletter
\renewcommand\AB@affilsepx{ --- \protect\Affilfont}
\makeatother

\title{Commutator Method for Averaging Lemmas}

\date{\vspace{-5ex}}

\author{Pierre-Emmanuel Jabin\thanks{Center for Scientific Computation and Mathematical Modeling (CSCAMM) and Department of Mathematics, University of Maryland, College Park, MD 20742, USA. P.-E. Jabin is partially supported by NSF DMS Grant 161453, 1908739,
and NSF Grant RNMS (Ki-Net) 1107444. Email: \texttt{pjabin@cscamm.umd.edu.}}, Hsin-Yi Lin\thanks{CSCAMM and Department of Mathematics, University of Maryland, College Park, MD 20742, USA. Email: \texttt{hylin@cscamm.umd.edu.}}, Eitan Tadmor\thanks{CSCAMM, Department of Mathematics, and Institute for Physical Sciences \& Technology (IPST), University of Maryland, College Park, MD 20742, USA. Research of E. Tadmor was supported in part by NSF grants DMS16-13911, RNMS11-07444 (KI-Net) and ONR grant N00014-1812465. Email: \texttt{tadmor@cscamm.umd.edu.}}}

\begin{document}
\maketitle
\begin{abstract}
We introduce a commutator method with multipliers to prove averaging lemmas, the regularizing effect for the velocity average of solutions for kinetic equations. This method requires only elementary techniques in Fourier analysis and shows a new range of assumptions that are sufficient for the velocity average to be in $L^2([0,T],H^{1/2}_x).$ This result not only shows an interesting connection between averaging lemmas and local smoothing property of dispersive equations, but also provide a direct proof for the regularizing effect for the measure-valued solutions of scalar conservation laws in space dimension one.

\end{abstract}

\section{Introduction}

\subsection{Brief overview for averaging lemmas}

Our goal of this paper is to introduce the \textbf{commutator method} for kinetic transport equations:
\begin{equation}\label{transport}
\varepsilon \partial_t f+a(v)\cdot\nabla_x f=(-\Delta_v)^{\alpha/2} g,
\end{equation}
where $\varepsilon>0,$ $\alpha\geq 0$, $a:\mathbb{R}_v^n \rightarrow\mathbb{R}^n$ and $g:\mathbb{R}_t \times \mathbb{R}^n_v \times \mathbb{R}_x^n \rightarrow \mathbb{R}$ are given functions. $\varepsilon$ is the macroscopic scale normally introduced when a hydrodynamics limit is considered. The nonlinear coefficients $a(v)$ in this setting appears in kinetic formulation of scalar conservation law, and also in kinetic models under relativistic and quantum setting \cite{Escobedo}, \cite{Golse5}.

We shall utilize this method as a new approach to derive \textbf{averaging lemmas}, which state that by taking average in microscopic $v$ variable, the velocity average of $f$ 
$$
\rho_\phi(t,x):=\int f(t,x,v)\phi(v)\, dv, \,\,\,\,\,\,\text{$\phi\in L_c^\infty,$}
$$
has better regularity than $f$ and $g$ in $x$ variable, where $L^\infty_c$ is the space containing all the bounded and compactly supported functions. There is a vast literature of averaging lemmas, and here we only mention few of them that are relatively closer to our discussion. 
This type of results is famous for getting compactness for the Vlasov-Maxwell system \cite{DiPerna3}, renormalized solutions  \cite{DiPerna4} and hydrodynamic limits for the Boltzmann equation \cite{Golse3}, and the convergence of the renormalized solutions to the semiconductor Boltzmann-Poisson system \cite{Masmoudi}. It also contributes to the regularizing effect of solutions wherever the kinetic formulations exist, such as the isentropic gas dynamics \cite{Lions3}, Ginzburg-Landau model \cite{Jabin1}, and scalar conservation laws \cite{Lions2}.

Classical averaging lemmas were first introduced independently in  \cite{Agoshkov} and \cite{Golse1} under $L^2$ setting. The derivation in \cite{Golse1} involves decomposition in Fourier space according to the order of $a(v)\cdot \xi$, and controlling the singular part $|a(v)\cdot \xi|<c$ with the non-degeneracy condition. Combining with interpolation arguments, it was later extended to general $L^p$, $1<p<\infty$ by \cite{Bezard} and \cite{DiPerna5}. It was followed by the optimal Besov results proved in \cite{DeVore} by using wavelet decomposition. The regularity for the $L^p$ case is further improved in the one-dimensional case, precisely from $\frac{1}{p}$ to $1-\frac{1}{p}$ when $p> 2$ by \cite{Arsenio3}, with dispersive property and dyadic decomposition.

Averaging lemmas under different conditions on $f$ and $g$ were  further discussed. For instance, in \cite{Westdickenberg} the author considered $f$ and $g$ in the same Besov space in $x$ but can have different integrability in $v$. The results for general mixed norms assumptions were obtained in \cite{Jabin3} \cite{Jabin4}. Their work inspired \cite{Arsenio2} to consider the case when $f$ and $g$ have less integrability in $x$ than $v$. Except for the explorations in the direction of general conditions, averaging results for a larger class of operators in the form of $a(v)\cdot\nabla_x-\nabla_x^{\perp}\cdot b(v) \nabla_x$ were acquired by \cite{Tadmor}. They presented several applications for their results and especially, they improved the regularity of solutions for scalar conservation laws.

The limiting $L^1$ case for classical averaging lemmas in general is not true, and a counterexample was given in \cite{Golse1}. However, $L^1$ compactness can be proved with equi-integrability in only $v$ variable \cite{Golse6}, and was extended to more general transport equations in \cite{Arsenio1} and \cite{Daniel}.

\subsection{Commutator method with multiplier technique}

In this work we use commutator method with multipliers to transform the dispersion of transport operator in Fourier space into gain of regularity in $x$ variable. Let us introduce the commutator method in a general setting, and narrow down to our case shortly. Assume
$$
\varepsilon\partial_t f + B f= g,
$$
where $B$ is a skew-adjoint operator, $\varepsilon\leq 1$ and $g$ are given. For a time-independent operator $Q$, we consider 
$$
\varepsilon\partial_t \int f\, \overline{Qf}\, dx\,dv =\int [B,Q] \,  f \bar{f} \, dx\,dv+\int g \, \overline{Qf}\, dx\,dv+\int f \, \overline{Qg}\, dx\,dv
$$
And by fundamental theorem of calculus we have 
\begin{equation}\label{bound}
\begin{split}
Re\int_0^T [B,Q] \,  f \bar{f} \, dx\,dv\, dt \leq &\sup_{t=0,T}\left|\int f\, \overline{Qf}\, dx\,dv \right|+\left|\int g \, \overline{Qf}\, dx\,dv\, dt\right|\\
&    +\left|\int f \, \overline{Qg}\, dx\,dv\, dt\right|.
\end{split}
\end{equation}
The idea is to find $Q$, bounded in some $L^p$ spaces, such that the commutator of $B$ and $Q$, $[B,Q],$ is positive-definite and gain extra derivatives. Hence by applying these conditions on (\ref{bound}) we get a desired bound on $f$.

This method was used for example by taking $B$ to be of Schr\"{o}dinger type, where the commutator appear naturally from the Hamilton vector field. Roughly speaking it involves constructing a proper symbol, which corresponds to $Q,$ such that the Poisson bracket implies a spacetime bound on $f$ by G\r{a}rding's inequality. See for example \cite{Colliander}, \cite{Doi}, \cite{Kajitani} and \cite{Staffilani}.

\bigskip

In this paper we fix $B$ to be the kinetic transport operator, 
\begin{equation}\label{smooth}
\varepsilon \partial_t f+a(v)\cdot\nabla_x f=g,
\end{equation}
and $Q$ is a bounded multiplier operator. That is, we consider
$$
\mathcal{F}_{\xi,\zeta}(Qf):=m(\xi,\zeta)\mathcal{F}_{\xi,\zeta}(f),
$$
where $m$ is bounded. So there is a tempered distribution $K(x,v)$ such that $Qf=K\star_{x,v}f$ with $\mathcal{F}_{\xi,\zeta}(K)=m$. In this case the commutator becomes
\[
\begin{split}
&\int [a(v)\cdot \nabla_x, K\star_{x,v}]f\bar{f}\, dx\, dv\\
&\ =\int (a(v)-a(w))\cdot \nabla_{x}K(x-y,v-w)f(y,w)\, dy\, dw f(x,v)\, dx\, dv.
\end{split}
\]
When $a(v)=v$, it is simply the quadratic form with the multiplier $\xi\cdot\nabla_\zeta m.$ We shall take an advantage of this simple formula and show that the velocity average of $f$ would gain regularity $1/2$ in $x$ when $a(v)=v$, and $g$ is not singular.

The multiplier we select for this purpose is
$$
m_0(\xi,\zeta)=\frac{\xi}{|\xi|}\cdot \frac{\zeta}{(1+|\zeta|^2)^{1/2}},
$$ 
and the corresponding kernel 
$$
K_0=R\cdot\nabla_v G^n_1,
$$
where $R$ is the Riesz potential and $G^n_1$ is the Bessel potential of order $1$ in dimension $n$.
With this choice by Plancherel identity,
\begin{align*}
&\int [v\cdot\nabla_x, K_0 \star_{x,v}] f \bar{f} \, dx\, dv \, dt= \int \xi\cdot\nabla_\zeta m_0 |\hat{f}|^2 \, d\xi \, d\zeta \, dt\\
&\quad=\int\int\left[\frac{1}{(1+|\zeta|^2)^{1/2}}-\frac{\left|\frac{\xi}{|\xi|}\cdot\zeta\right|^2}{(1+|\zeta|^2)^{3/2}}\right]|\xi\|\hat{f}|^2\, d\zeta \, d\xi\,dt\\
&\quad\geq \int\int \frac{|\xi|}{(1+|\zeta|^2)^{3/2}}|\hat{f}|^2\, d\zeta \, d\xi\,dt=\|f\|^2_{L^2\left([0,T], H^{1/2}(\mathbb{R}^n_x, H^{-3/2}(\mathbb{R}^n_v)\right)}.
\end{align*} 

From classical Fourier theory (see for example \cite{Stein}), $K_0$ is bounded on $L^p$ spaces for all $1<p<\infty.$ With this the right hand side of (\ref{bound}) is bounded as long as $f$ is in $L^\infty\left([0,T], L^2(\mathbb{R}^n_x\times\mathbb{R}^n_v)\right)$ and the dual space of $g$. For convenience, let us denote the conjugate index of $p$ by $p',$ that is, $\frac{1}{p}+\frac{1}{p'}=1.$ From the discussion above, we have shown:
\begin{theorem}\label{theorem1}
	Let $\varepsilon\leq 1.$ If $f\in L^\infty\left([0,T], (L^{2}\cap L^{p})(\mathbb{R}^{n}_x\times \mathbb{R}^{n}_v)\right)$ solves (\ref{smooth}) with $a(v)=v$ for
	some $g\in L^1\left([0,T], L^{p'}(\mathbb{R}^{n}_x\times \mathbb{R}^{n}_v)\right)$, where $1< p<\infty$,  then for all $\phi \in H^{3/2}(\mathbb{R}^n_v),$ $\rho_\phi\in L^2\left([0,T], H^{1/2}(\mathbb{R}^n_x)\right),$ and
	\[
	\|f\|^2_{L^2_t H^{1/2}_x H^{-3/2}_v} \leq C \left(\|f\|^2_{L^\infty_t L^{p}_{x,v}}+ \|g\|_{L^1_t L^{p'}_{x,v}}^2\right),
	\]
	where $C$ is independent of $\varepsilon.$
\end{theorem}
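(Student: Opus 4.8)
The plan is to specialize the general commutator identity~(\ref{bound}) to $B=v\cdot\nabla_x$ and $Q=K_0\star_{x,v}$ and then read off the estimate. Here $B=v\cdot\nabla_x$ is skew-adjoint on $L^2(\mathbb{R}^n_x\times\mathbb{R}^n_v)$ because it carries no $v$-derivative and $\nabla_x$ is skew-adjoint in $x$, while $Q$ has the real and even symbol $m_0$ and is therefore self-adjoint; these are exactly the structural facts that produce the commutator $\int[v\cdot\nabla_x,K_0\star]f\bar f$ on the left of~(\ref{bound}) after taking real parts. All of the genuinely creative input is already supplied in the excerpt: the choice of $m_0$, the computation of the commutator symbol $\xi\cdot\nabla_\zeta m_0$, and the G\r{a}rding-type lower bound $\int_0^T\int[v\cdot\nabla_x,K_0\star]f\bar f\,dx\,dv\,dt\ge\|f\|^2_{L^2_tH^{1/2}_xH^{-3/2}_v}$. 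What remains is to bound the right-hand side of~(\ref{bound}), to pass from $f$ to $\rho_\phi$, and to make the whole computation legitimate for the rough solution at hand.

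For the right-hand side I would integrate the identity in $t$, so that the positive commutator equals $\varepsilon$ times the boundary contribution minus the two source pairings. The boundary term I bound by Plancherel: since $\int f\,\overline{Qf}\,dx\,dv=\int\overline{m_0}\,|\hat f|^2\,d\xi\,d\zeta$ and $|m_0|\le 1$, one has $\sup_{t=0,T}|\int f\,\overline{Qf}\,dx\,dv|\le\|f\|^2_{L^\infty_tL^2_{x,v}}$, and because this enters multiplied by $\varepsilon\le 1$ the resulting constant is independent of $\varepsilon$. The two source terms I estimate by H\"older in the dual pair $L^p$--$L^{p'}$ together with the boundedness of $Q=K_0\star$ on $L^p$ and on $L^{p'}$ (from \cite{Stein}): this gives $|\int g\,\overline{Qf}\,dx\,dv|+|\int f\,\overline{Qg}\,dx\,dv|\le C\|g\|_{L^{p'}_{x,v}}\|f\|_{L^p_{x,v}}$, and integrating in time against $\|g\|_{L^1_tL^{p'}}$ and $\|f\|_{L^\infty_tL^p}$ produces $C\|f\|_{L^\infty_tL^p_{x,v}}\|g\|_{L^1_tL^{p'}_{x,v}}$. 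Young's inequality then converts this product into the squared norms appearing in the claimed bound.

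To deduce the regularity of $\rho_\phi$ I would test against $\phi$ in the $v$ variable. Writing $\hat f$ for the partial Fourier transform in $x$ and pairing $v\mapsto\hat f(t,\xi,v)$ with $\phi$ through the $H^{-3/2}_v$--$H^{3/2}_v$ duality gives $|\hat\rho_\phi(t,\xi)|\le\|\hat f(t,\xi,\cdot)\|_{H^{-3/2}_v}\|\phi\|_{H^{3/2}_v}$; multiplying by the $x$-weight and integrating in $\xi$ and $t$ (Fubini) yields $\|\rho_\phi\|_{L^2_tH^{1/2}_x}\le\|\phi\|_{H^{3/2}_v}\|f\|_{L^2_tH^{1/2}_xH^{-3/2}_v}$, finite by the previous step.

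The step I expect to be the main obstacle is the rigorous justification of~(\ref{bound}) for a solution that is only $L^\infty_t(L^2\cap L^p)$ with $g\in L^1_tL^{p'}$, since the identity presupposes enough regularity to differentiate $\int f\,\overline{Qf}$ in time and to integrate by parts in $x$, and the commutator term is genuinely order one in $x$ and hence not a priori finite. I would handle this by regularizing $f$ and $g$ through convolution in $x$; this mollification commutes both with the transport operator $v\cdot\nabla_x$ and with the Fourier multiplier $Q$, so the smoothed $f_\delta$ solves the same equation with no DiPerna--Lions commutator error, and the formal computation is then licit for $f_\delta$ (mollifying also in $t$ if needed to run the energy calculus). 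On the right-hand side the source and boundary pairings converge by the $L^p$ and $L^{p'}$ continuity of the mollifiers and of $Q$; on the left I would \emph{not} pass to the limit directly but instead use that the integrand $\xi\cdot\nabla_\zeta m_0\,|\hat f_\delta|^2$ is nonnegative, so that Fatou's lemma bounds $\|f\|^2_{L^2_tH^{1/2}_xH^{-3/2}_v}$ from above by $\liminf_\delta$ of the right-hand sides. This "estimate first, pass to the limit second" device is standard but is the only place where real care is required.
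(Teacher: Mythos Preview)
Your proposal is correct and follows essentially the same route as the paper: the paper's proof of Theorem~\ref{theorem1} \emph{is} the computation displayed immediately before its statement, namely the choice of $m_0$, the lower bound $\int\xi\cdot\nabla_\zeta m_0\,|\hat f|^2\ge\|f\|^2_{L^2_tH^{1/2}_xH^{-3/2}_v}$, and the observation that $K_0$ is a Calder\'on--Zygmund operator so that the right-hand side of~(\ref{bound}) is controlled by the $L^2$ norm of $f$ (for the boundary term, via $|m_0|\le1$ and Plancherel) and the $L^p$--$L^{p'}$ pairing (for the source terms). You have simply made explicit two steps the paper leaves to the reader---the mollification-in-$x$ argument that legitimizes the energy identity for rough data, and the $H^{-3/2}_v$--$H^{3/2}_v$ duality that passes from $f$ to $\rho_\phi$---and both are handled correctly.
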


\begin{remark}
	By Wigner transform, this result with $p=2$ connects to the local smoothing effect for Schr\"{o}dinger equation. 
\end{remark}

\begin{remark}
	The exchange of regularity between $x$ and $v$ variables is visible through the calculation of commutator, which shares its similarity with the hypoellipticity phenomenon.  Very roughly speaking, it is a phenomenon that the degenerate directions can be recovered by commutators, which was developed systematically by H\"{o}rmander \cite{Hormander} for Fokker-Planck type of operators.  For the hypoellipticity of kinetic transport equations we refer to \cite{Bouchut2}. 
	
	The difference here is that we added a homogeneous zero multiplier $m_0$ as a buffer, which takes on the impact from the transport operator. So the request for extra regularity in $v$ goes to the test function $\phi$, unlike the results in \cite{Bouchut2}, which asked for extra regularity in $v$ for $f$.
\end{remark}

Notice here the requirement of test functions can be adapted to the  $L^\infty_c$, same as classical averaging results. This is because  the product $f\phi$ with $\phi\in L^\infty_c(\mathbb{R}^n_v)$ still satisfies the kinetic transport equation, and the same procedure would give $f\phi \in L^2\left([0,T], H^{1/2}(\mathbb{R}^n_x, H^{-3/2}_v(\mathbb{R}^n_v))\right).$ Now because of the compact support of the integration, we can take a smooth function identically one inside the integral domain. Our main results will require the test functions to be in $L_c^\infty,$ and this argument can be found later in the proof of Theorem \ref{theorem2} in Section \ref{section4}.

Our setting is reminiscent of the multiplier method in \cite{Gasser}. It was used to prove moment and trace lemmas for kinetic equations. For them, the dispersive nature of solutions was acquired by integrating along characteristics in physical space, while here we utilize the technique in frequency domain and so it results in gain of regularity.

For the rest of this paper we are going to extend this method to the general transport equation (\ref{transport}) with the variable coefficient $a(v)$ and a singular source term $(-\Delta_v)^{\alpha/2}g$, which introduce difficult technical issues. The commutator method pairing with $m_0$ will be the main mechanism for our proofs. The advantage of this approach is that the integrability of $f$ and $g$ can be of assistance to each other. This is the feature that distinguishes our results from others in the literature, and provides averaging results for a new type of mixed integrability assumptions, which fits nicely for the conditions that the kinetic formulation of scalar conservation law naturally attain.

This paper is organized as follows. We shall present our main theorems in Section \ref{section2}, and an example of application to scalar conservation laws in Section \ref{section3}. Finally proofs of theorems are in Section \ref{section4}. 

%%%%%%%%%%%%%%%%%%%%%%%%%%%%%%%%%%%%%%%%%%%%%
%%%%%%%%%%%%%%%%%%%%%%%%%%%%%%%%%%%%%%%%%%%%%
\section{Main results}\label{section2}
%%%%%%%%%%%%%%%%%%%%%%%%%%%%%%%%%%%%%%%%%%%%%
%%%%%%%%%%%%%%%%%%%%%%%%%%%%%%%%%%%%%%%%%%%%%
%%%%%%%%%%%%%%%%%%%%%%%%%%%%%%%%%%%%%%%%%%%%%%%%%%%%%%%%%%%
\subsection{Our main velocity averaging result}
%%%%%%%%%%%%%%%%%%%%%%%%%%%%%%%%%%%%%%%%%%%%%%%%%%%%%%%%%%%%
We present averaging lemmas for (\ref{transport}) derived by the commutator method. To have dispersion in Fourier space for the kinetic transport operator $a(v)\cdot\nabla_x$, one need conditions on the variable coefficients $a(v).$ Indeed, there is no gain of regularity if $a$ is only constant for example.

In this section, we assume $a(v)\in Lip(\mathbb{R}^n)$ with conditions: 
\begin{equation}\label{condition}
\text{$a(v)$ one-to-one, } \text{ and } J_{a^{-1}}\in L^\gamma,
\end{equation}
where $J_{a^{-1}}=det(D a^{-1}).$
The assumptions quantify the nonlinearity of $a(v)$ with index $\gamma$, and allow us to control the integrability of functions after the change of variables $v\mapsto w=a(v).$

Our proof involves regularization of equation (\ref{transport}) through various embeddings. The interaction between embedding and the singular term $(-\Delta_v)^{\alpha/2}g$ will affect the resulting gain of regularity, and this introduce several exponents and indices in the formulas which we collect below,
\begin{equation}\label{d3}
d_1=\max{ \left\{n\left(\frac{1}{p_2}+\frac{1}{q_2}-\ell \right) ,0  \right\}   } , \quad d_2=\max{ \left\{n\left(\frac{2}{p_2}-\ell \right) ,0  \right\}   },\ \ell=\frac{\gamma-2}{\gamma-1},
\end{equation}
\begin{equation}\label{d2}
d_3=\max{ \left\{n\left(\frac{1}{p_1}+\frac{1}{q_1}-1\right) ,0  \right\}   }, \,\,\,\,\,d_4=\max{ \left\{n\left(\frac{2}{p_1}-1\right) ,0  \right\}   }.
\end{equation}

$$
\,
$$

Our result is as follows:
\begin{theorem}\label{theorem2}
	Given $\alpha\geq 0$, $T>0$ and $0<\varepsilon\leq 1.$ Let $a \in Lip(\mathbb{R}^n)$ satisfy (\ref{condition}) with $\gamma \geq 2$. Let $f\in L^\infty([0,T], L^{p_1}(\mathbb{R}^n_x, L^{p_2}(\mathbb{R}^n_v)))$ solve (\ref{transport}) for some $g\in L^1([0,T], L^{q_1}(\mathbb{R}^n_x , L^{q_2}(\mathbb{R}^n_v)))$, with $p_1,\,p_2,\, q_1, \, q_2 \in [1,\infty]$. Then for any ball $B_R(x_0)\subset\mathbb{R}^n_x$ and $\phi \in C^{\infty}_c(\mathbb{R}^n_v)$, one has that 
	$\rho_{\phi}(t,x)\in L^2([0,T], H^{s}(B_R(x_0))) $ 
	for all $s<S,$ with
	$$
	\|\rho_\phi\|^2_{L^2([0,T], H^{s}_x(B_R(x_0)))}\leq C\left(\|f\|_{L^\infty([0,T], L^{p_1}(\mathbb{R}^n_x, L^{p_2}(\mathbb{R}^n_v)))}^2+\|g\|^2_{L^1([0,T], L^{q_1}(\mathbb{R}^n_x, L^{q_2}(\mathbb{R}^n_v)))}\right),
	$$
	where $ S=\frac{1}{2}\left\{(1-d_2)\theta-d_4\right\} $
	with
	$
	\theta=\left[\min\left\{\frac{1-(d_3-d_4)}{\alpha+1+(d_1-d_2)},1  \right\}  \right],
	$
	where $d_i$ are defined in (\ref{d3}) and (\ref{d2}) for $i=1,2,3,4$ and $C$ only depends on $R$, $p_1$, $q_1$ and $Lip(a).$
	\label{maintheorem}
\end{theorem}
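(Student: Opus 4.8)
The plan is to re-run the commutator identity (\ref{bound}) with the same zero-order multiplier $m_0$ that proved Theorem \ref{theorem1}, and to pay for each of the three new features --- the variable coefficient $a(v)$, the mixed integrability of $f$ and $g$, and the singular source $(-\Delta_v)^{\alpha/2}g$ --- with a separate device: a change of velocity variables, a family of Sobolev embeddings, and a truncation in $v$-frequency followed by interpolation. The four indices $d_1,\dots,d_4$ should emerge as embedding costs and the exponent $\theta$ as the outcome of the interpolation.

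First I would linearize the transport term by the change of variables $w=a(v)$. Because (\ref{condition}) makes $a$ a bi-Lipschitz bijection, $\tilde f(t,x,w):=f(t,x,a^{-1}(w))$ solves a constant-coefficient equation $\varepsilon\partial_t\tilde f+w\cdot\nabla_x\tilde f=\tilde G$, so the clean positive quadratic form $\int \xi\cdot\nabla_\zeta m_0\,|\hat{\tilde f}|^2\ge \int |\xi|(1+|\zeta|^2)^{-3/2}|\hat{\tilde f}|^2$ of Theorem \ref{theorem1} is available in the new variables. The cost is paid in integrability: estimating $\tilde f$ in $L^r_w$ in terms of $\|f\|_{L^{p_2}_v}$ requires inserting the Jacobian $J_{a^{-1}}$ and applying H\"{o}lder, and the exponent $\gamma$ in $J_{a^{-1}}\in L^\gamma$ is precisely what dictates the admissible loss; this is the origin of $\ell=\frac{\gamma-2}{\gamma-1}$ in (\ref{d3}). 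Next, since the commutator identity is an $L^2(dx\,dv)$ statement while $f\in L^{p_1}_xL^{p_2}_v$ and $g\in L^{q_1}_xL^{q_2}_v$, I would interpolate each factor into the $L^2$-based Sobolev scale via $L^{p}\hookrightarrow H^{-n(1/p-1/2)}$. Because the quadratic form carries two copies of $f$ while the source term carries one $f$ and one $g$, the $x$-costs add up to $d_4=\max\{n(2/p_1-1),0\}$ and $d_3=\max\{n(1/p_1+1/q_1-1),0\}$, and the $w$-costs (after the Jacobian bookkeeping that replaces $1$ by $\ell$) to $d_2$ and $d_1$ in (\ref{d3}).

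With $f$ and $g$ placed in the $L^2$-Sobolev framework, I would return to (\ref{bound}). The boundary and $f\overline{Qf}$-type terms are controlled by $\|f\|_{L^\infty_t L^{p_1}_xL^{p_2}_v}^2$ through the above embeddings, so the real issue is the source $(-\Delta_v)^{\alpha/2}g$ paired with $Qf$. Since $(-\Delta_v)^{\alpha/2}$ and $Q$ are both Fourier multipliers, this pairing is $\int g\,\overline{(-\Delta_v)^{\alpha/2}Qf}$, whose symbol $|\zeta|^\alpha m_0$ is unbounded; I would therefore split the $v$-frequencies at a level $\Lambda$. On $|\zeta|\le\Lambda$ the symbol is bounded by $\Lambda^\alpha$ and the term is estimated by the source norm, while the tail $|\zeta|>\Lambda$ is absorbed using the decay $(1+|\zeta|^2)^{-3/2}$ built into the commutator lower bound together with the integrability of $f$. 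Optimizing the split over $\Lambda$ balances the order-$1$ gain $|\xi|$ of the commutator against the loss $\alpha$ of the source and the differential embedding costs $d_1-d_2$ and $d_3-d_4$; this is exactly what produces $\theta=\min\{(1-(d_3-d_4))/(\alpha+1+(d_1-d_2)),1\}$, the cap at $1$ recording that one cannot extract more than the full smoothing of the quadratic form. Collecting exponents and taking a square root turns the order-$1$ (in $x$) bound on $\|\tilde f\|^2$ into the claimed $H^S_x$ bound with $S=\frac12\{(1-d_2)\theta-d_4\}$. Finally, to pass to $\rho_\phi$ I would multiply by $\phi\in C_c^\infty(\R^n_v)\subset H^{3/2}_v$, so that pairing against the $H^{-3/2}_v$ control yields $\rho_\phi$, localize in $x$ by a cutoff equal to $1$ on $B_R(x_0)$, and use that $f\phi$ again solves (\ref{transport}), exactly as sketched after Theorem \ref{theorem1}.

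The main obstacle is the tension between the singular source and the negative $v$-regularity of the commutator: the term $(-\Delta_v)^{\alpha/2}g$ pushes toward high $v$-frequencies while the positive quadratic form only controls $f$ in $H^{-3/2}_v$, so neither can be estimated against the other directly and the whole result hinges on choosing the truncation $\Lambda$ and the interpolation correctly. Compounding this, the nonlocal operator $(-\Delta_v)^{\alpha/2}$ does not transform cleanly under $w=a(v)$, so the bookkeeping that simultaneously tracks the Jacobian loss (the index $\gamma$, hence $\ell$), the four Sobolev costs $d_1,\dots,d_4$, and the $\Lambda$-optimization must be carried out carefully to land on the precise exponent $S$; getting all of these to combine into the stated closed form is where the technical weight of the proof lies.
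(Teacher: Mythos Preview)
Your high-level scheme is right --- commutator method with $m_0$, change of variables $v\mapsto a(v)$, and some $\xi$-dependent device to trade $x$-regularity against the $v$-derivatives in the source --- but the order of operations you propose will not close, and the mechanism you give for the cap $\theta\le 1$ is not the actual one.

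The paper does \emph{not} change variables first. It mollifies $\tilde f\phi$ in the original $v$-variable at the $\xi$-dependent scale $|\xi|^{-s_1}$, setting $F_{s_1}=(\tilde f\phi)\star_v\Phi_{|\xi|^{-s_1}}$, and only then performs $v\mapsto a(v)$. This order matters for exactly the reason you flag at the end: $(-\Delta_v)^{\alpha/2}$ does not transform cleanly under $w=a(v)$ (and $a$ is not bi-Lipschitz under (\ref{condition}) unless $\gamma=\infty$, so $\tilde f(t,x,w)=f(t,x,a^{-1}(w))$ is not even obviously in any $L^r_w$). Mollifying first transfers $(-\Delta_v)^{\alpha/2}$ from $g$ onto the smooth kernel $\Phi$, producing a harmless factor $|\xi|^{\alpha s_1}$; the subsequent change of variables then acts on bounded compactly supported functions and is controlled distributionally by Proposition~\ref{proposition1} (this is where $\ell=\tfrac{\gamma-2}{\gamma-1}$ enters). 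The same mollification, together with a Littlewood--Paley localization $\chi(2^{-k}\xi)$ in $x$, also supplies the Bernstein factors that you were seeking via abstract embeddings $L^p\hookrightarrow H^{-n(1/p-1/2)}$. Crucially, mollifying \emph{before} the transport is linearized produces a genuine remainder
\[
Com^1(v)=i\int (a(v)-a(w))\cdot\xi\,\tilde f(w)\phi(w)\,\Phi_{|\xi|^{-s_1}}(v-w)\,dw,
\]
which by the Lipschitz bound on $a$ is of order $|\xi|^{1-s_1}$; balancing this against the source term of order $|\xi|^{\alpha s_1}$ is what forces $s_1\le 1$ and yields $\theta$. Your alternative --- change variables first, then split at a $v$-frequency level $\Lambda$ and absorb the tail into the $(1+|\zeta|^2)^{-3/2}$ weight of the commutator lower bound --- is circular as stated, since that weighted quantity is the left-hand side you are trying to control, and the low-frequency piece cannot be bounded by $\Lambda^\alpha\|g\|$ once the source has been scrambled by the change of variables. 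Finally, the passage from the mollified object back to $\rho_\phi$ is not by $H^{3/2}$ duality as in Theorem~\ref{theorem1}; it uses that the velocity average is a zero-$\zeta$ evaluation, so $\int F_{s_1}\,\psi(a(v))\,dv$ recovers $\int \tilde f\phi\,dv$ exactly for a suitable test function (Lemma~\ref{lemma2}).
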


\begin{remark}
	The restriction $\gamma \geq 2$ can be relaxed, but with a different formula for $S=\frac{1}{2}\left\{\left[1-n\left(\frac{2}{p_2}+\frac{2}{\gamma}-1\right)\right]\tilde{\theta}-d_4\right\}$ when $1\leq \gamma < 2$, where $\tilde{\theta}=\min\left\{\frac{1-(d_3-d_4)}{\alpha+1+n\left(\frac{1}{q_2}-\frac{1}{p_2}\right)},1  \right\}$.
\end{remark}

\begin{remark}\label{remark4}
	If $f\in L^\infty([0,T], B^0_{p_1,2}(\mathbb{R}^n_x, L^{p_2}(\mathbb{R}^n_v)))$ and $g\in L^1([0,T], B^0_{q_1,2}(\mathbb{R}^n_x, L^{q_2}(\mathbb{R}^n_v))),$ the end point $s=S$ can be included when $p_1,p_2,q_1,q_2\in(1,\infty).$
\end{remark}

\begin{remark}
	Because of the quadratic form in our method, our result always bounds the velocity average in $L^2$, and the bound has the same weight on the norms of $f$ and $g$, independent of $p_1, p_2, q_1, q_2$.
\end{remark}

When $a(v)=v$, one has that $\gamma=\infty.$ In this case, we have a simpler formula for Theorem \ref{theorem2} when $f$ and $g$ are in the dual space of each other:

\begin{corollary}\label{corollary1}
	Given $\alpha\geq 0$, $T>0$ and $0<\varepsilon\leq 1.$ If $f$ belongs to the space $L^\infty\left([0,T],  L^{p_1}(\mathbb{R}^n_x , L^{p_2}(\mathbb{R}^n_v))\right)$ and solves (\ref{transport}) with $a(v)=v$ for
	some\\ $g\in L^1\left([0,T], L^{p_1'}(\mathbb{R}^n_x , L^{p_2'}(\mathbb{R}^n_v))\right)$, where $p_1,p_2\in [2,\infty].$ Then for any $\phi \in C^{\infty}_c(\mathbb{R}^n_v)$, $\rho_\phi\in L^2\left([0,T], H^{s}(\mathbb{R}^n_x)\right)$ for all $s< \frac{1}{2(\alpha+1)} .$
\end{corollary}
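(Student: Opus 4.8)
The plan is to obtain Corollary \ref{corollary1} as a direct specialization of Theorem \ref{theorem2} to $a(v)=v$, followed by an upgrade from a local to a global estimate in $x$. First I would set $a(v)=v$, which forces the Jacobian $J_{a^{-1}}\equiv 1$ and hence $\gamma=\infty$; passing to the limit in $\ell=\frac{\gamma-2}{\gamma-1}$ gives $\ell=1$. Inserting the dual exponents $q_1=p_1'$ and $q_2=p_2'$ into (\ref{d3}) and (\ref{d2}), I would check that every index collapses: since $\frac1{p_i}+\frac1{p_i'}=1$ the arguments defining $d_1$ and $d_3$ vanish, so $d_1=d_3=0$, while $p_1,p_2\in[2,\infty]$ gives $\frac{2}{p_i}\le 1$, so the arguments defining $d_2$ and $d_4$ are nonpositive and $d_2=d_4=0$.

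With all $d_i=0$ the exponent in Theorem \ref{theorem2} simplifies immediately. I would compute $\theta=\min\{\frac{1-(d_3-d_4)}{\alpha+1+(d_1-d_2)},1\}=\min\{\frac{1}{\alpha+1},1\}$, and because $\alpha\ge 0$ forces $\frac{1}{\alpha+1}\le 1$ this equals $\theta=\frac{1}{\alpha+1}$. Then $S=\frac12\{(1-d_2)\theta-d_4\}=\frac12\cdot\frac{1}{\alpha+1}=\frac{1}{2(\alpha+1)}$, which is the claimed threshold, so at this stage Theorem \ref{theorem2} already yields $\rho_\phi\in L^2([0,T],H^s(B_R(x_0)))$ for every $s<\frac{1}{2(\alpha+1)}$ and every ball.

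The remaining, and really the only substantive, point is to replace $B_R(x_0)$ by all of $\mathbb{R}^n_x$. The plan is to revisit the proof of Theorem \ref{theorem2} and observe that the spatial truncation there is introduced solely to control the factor $J_{a^{-1}}$ after the change of variables $w=a(v)$, via a H\"older estimate that costs the index $\gamma$. For $a(v)=v$ this change of variables is the identity and $J_{a^{-1}}\equiv1\in L^\infty$, so no localization in $x$ is needed and the commutator identity driven by the multiplier $m_0$ --- exactly as in the global estimate of Theorem \ref{theorem1} --- can be run directly on $\mathbb{R}^n_x$. Concretely, I would apply the commutator method to $f\phi$ (which still solves the transport equation, with a smooth cutoff identically $1$ on the support of $\phi$, as indicated after Theorem \ref{theorem1}), pair against $Q=K_0\star$, and use that $f$ and $(-\Delta_v)^{\alpha/2}g$ sit in dual mixed-norm spaces so that the right-hand side of (\ref{bound}) is finite globally; this produces the global $L^2_tH^s_x$ bound with no ball.

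I expect the main obstacle to be the bookkeeping that removing the cutoff is legitimate: one must verify that each interpolation and embedding step used in Theorem \ref{theorem2} to absorb the singular factor $(-\Delta_v)^{\alpha/2}$ and to convert the $H^{-3/2}_v$ control into control of $\rho_\phi$ remains valid globally when $\gamma=\infty$, i.e. that the sole role of $B_R(x_0)$ was taming the Jacobian. If one instead insists on arguing purely from the local statement, the obstacle shifts to patching the uniform-in-$x_0$ local $H^s$ bounds into a global $H^s(\mathbb{R}^n_x)$ bound; for fractional $s\in(0,\tfrac12)$ this requires handling the long-range part of the Gagliardo seminorm across patches, which is doable with a bounded-overlap partition of unity but is the least routine part of that route.
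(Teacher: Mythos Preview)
Your index computation is correct and matches the paper's treatment exactly: the corollary is presented there simply as the specialization $\gamma=\infty$, $q_i=p_i'$ of Theorem~\ref{theorem2}, with no further argument offered beyond the sentence preceding the statement.

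You go beyond the paper in addressing the local-versus-global discrepancy (Theorem~\ref{theorem2} concludes only $H^s(B_R(x_0))$ while the corollary asserts $H^s(\mathbb R^n_x)$), which the paper does not discuss. However, your proposed resolution misidentifies the role of the spatial truncation. The Jacobian $J_{a^{-1}}$ enters only through the change of variables in $v$ (Proposition~\ref{proposition1}) and governs the $v$-integrability index $\ell$; it has nothing to do with localization in $x$. The compact $x$-support assumed at the start of the proof of Theorem~\ref{theorem2} is used instead in Lemma~\ref{lemma1} to pass from $L^{p_1}_x$ to $L^2_x$ when $p_1>2$: the boundary term is bounded via Plancherel in $\xi$ and Cauchy--Schwarz by $\|\mathcal F^{-1}_x(h\,\chi(2^{-k}\xi))\|_{L^2_{x,v'}}^2$, and this $L^2_x$ norm can be controlled by $\|f_k\|_{L^{p_1}_x L^{p_2}_v}$ with the exponent $d_4=0$ only after localization (Young's inequality in $x$ requires $p_1\le 2$, and Bernstein goes the wrong way). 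Your duality argument takes care of the cross terms $\int g\,\overline{Qf}$ and $\int f\,\overline{Qg}$ in (\ref{bound}), but not the self-term $\sup_{t=0,T}\bigl|\int f\phi\,\overline{Q(f\phi)}\bigr|$: both factors there come from $f\in L^{p_1}_x$, and boundedness of the Calder\'on--Zygmund operator $Q$ on $L^{p_1'}_x$ is of no use unless $f\phi\in L^{p_1'}_x$ as well, which for $p_1>2$ is exactly what compact $x$-support (or an additional $L^2_x$ hypothesis, as in Theorem~\ref{theorem1}) would supply. The same obstruction hits the term involving $Com^1$, which is again quadratic in $f$. So removing the cutoff is not merely bookkeeping in this regime.
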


Here we conclude with some relations between our result and previous literature.
\begin{itemize}
	\item First of all, let us point out that our velocity averaging result is independent of small $\varepsilon.$ This could have applications to the compactness of solutions for rescaled kinetic equations, which frequently appear in the discussions of hydrodynamic limits. For more in this direction we refer to for example \cite{Golse7} and \cite{Saint}.
	
	Moreover, since our argument doesn't perform a Fourier transform in time variable, this method has possible extensions for time discretized kinetic equations or stochastic cases. 
\end{itemize}

As there is already a huge literature on averaging lemmas, and under some situations the results were proven optimal,  we would like to give the readers an idea on when our method becomes effective, and what are the potential advantages our result could provide.

For the rest of this subsection, we will compare the regularity in $x$ of our result, with the theorems in \cite{Arsenio3}, \cite{DiPerna5} and \cite{Westdickenberg}. Because our resulting space has a different integrability from previous results except for the $L^2$ case, our method may render a more appropriate tool under certain circumstances. We will also point out the regions where one theorem can imply the other, through embedding or interpolation. The interpolation is applied between the resulting space of $\rho_\phi$ and the assumption  space of $f$, because $\rho_\phi $ has the same integrability in $x$ as $f$.

Notice some theorems we quote here apply to more general conditions in the original statements, but for simplicity we shall only state the parts that concern our discussion, and restrict to the special case $a(v)=v$. We also assume for convenience that $f$ and $g$ are compactly supported in $x$ and $v$, and $\phi\in C^\infty_c$ for this entire discussion.

Let us begin with the classical averaging result in \cite{DiPerna5},  where the different integrabilities for $f$ and $g$ and $\alpha>0$ are available.
\begin{theorem}\label{theorem3}\cite{DiPerna5}
	If $f\in L^{p}(\mathbb{R}_t\times \mathbb{R}^n_x \times \mathbb{R}^n_v)$ and $g\in L^{q}(\mathbb{R}_t\times \mathbb{R}^n_x \times \mathbb{R}^n_v)$, satisfying (\ref{transport}) with $a(v)=v$, then $\rho_\phi\in B^{s}_{r,\infty}(\mathbb{R}_t \times \mathbb{R}^n_x)$ where $s=\frac{1}{\bar{p}}\left(\alpha+\frac{1}{\bar{p}}+\frac{1}{\underline{q}}\right)^{-1}$, $\bar{p}=\max\left\{p,p'\right\}$, $\underline{q}=\min\left\{q,q'\right\}$, and $\frac{1}{r}=\frac{s}{q}+\frac{1-s}{p}$. Moreover, if $p=q\in (1,\infty),$ $\rho_\phi\in B^s_{r,t}(\mathbb{R}_t \times \mathbb{R}^n_x)$ where $t=\max\left\{p,2\right\}.$
	\label{DiPerna-Lions}
\end{theorem}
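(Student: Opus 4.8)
The plan is to work on the Fourier side in the $(t,x)$ variables and reduce everything to a quantitative \emph{resonant/non-resonant} decomposition in velocity, optimized by a free parameter $\delta>0$, and then to recover the general integrability index $r$ and the Besov scale by duality and real interpolation. Writing $\mathcal F_{t,x}$ for the partial Fourier transform, equation (\ref{transport}) with $a(v)=v$ becomes the algebraic relation $i(\varepsilon\tau+v\cdot\xi)\widehat f(\tau,\xi,v)=(-\Delta_v)^{\alpha/2}\widehat g(\tau,\xi,v)$, and $\widehat{\rho_\phi}(\tau,\xi)=\int \widehat f(\tau,\xi,v)\,\phi(v)\,dv$. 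For a threshold $\delta$ to be chosen, I split this $v$-integral over the resonant slab $\{|\varepsilon\tau+v\cdot\xi|<\delta\}$ and its complement; on the former I keep $f$ as is, and on the latter I solve the equation for $\widehat f$ and transfer $(-\Delta_v)^{\alpha/2}$ onto the test factor.

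First I would establish the sharp $L^2$ estimate, which is the case $p=q=2$. On the resonant slab, Cauchy--Schwarz gives $\bigl|\int_{|\varepsilon\tau+v\cdot\xi|<\delta}\widehat f\,\phi\,dv\bigr|\lesssim \|\widehat f\|_{L^2_v}\,\bigl|\{v\in\mathrm{supp}\,\phi:\ |\varepsilon\tau+v\cdot\xi|<\delta\}\bigr|^{1/2}$, and the non-degeneracy of $a(v)=v$ bounds this Lebesgue measure by $C\,\delta/|\xi|$, producing a factor $(\delta/|\xi|)^{1/2}$. On the complement I write $\int \widehat g\,(-\Delta_v)^{\alpha/2}\!\bigl[\mathbf 1_{\{|\varepsilon\tau+v\cdot\xi|\ge\delta\}}\,\phi/(i(\varepsilon\tau+v\cdot\xi))\bigr]\,dv$; since each $v$-derivative of $1/(\varepsilon\tau+v\cdot\xi)$ costs a factor $|\xi|/|\varepsilon\tau+v\cdot\xi|$, the fractional derivative contributes size $|\xi|^{\alpha}|\varepsilon\tau+v\cdot\xi|^{-(1+\alpha)}$, and integrating $|\varepsilon\tau+v\cdot\xi|^{-2(1+\alpha)}$ over the complement (again using the coarea factor $1/|\xi|$) yields a bound $\lesssim \|\widehat g\|_{L^2_v}\,|\xi|^{\alpha-1/2}\delta^{-\alpha-1/2}$. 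Balancing the two contributions, $\delta^{1/2}|\xi|^{-1/2}\sim|\xi|^{\alpha-1/2}\delta^{-\alpha-1/2}$, gives $\delta\sim|\xi|^{\alpha/(\alpha+1)}$ and the common decay $|\widehat{\rho_\phi}|\lesssim |\xi|^{-1/(2(\alpha+1))}(\|\widehat f\|_{L^2_v}+\|\widehat g\|_{L^2_v})$. On $\mathrm{supp}\,\phi$ the resonance forces $|\tau|\lesssim|\xi|$, so the gain is genuine in the joint frequency; multiplying by $|(\tau,\xi)|^{s}$ and using Plancherel in $v$ then yields $\rho_\phi\in H^{s}$ with $s=\tfrac1{2(\alpha+1)}$, exactly the stated exponent at $p=q=2$.

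To reach general $p,q$ I would not use Plancherel (unavailable off $L^2$) but rather combine the $L^2$ estimate above with the crude endpoint bounds (no gain) obtained by placing $f$ in $L^1_v\cap L^\infty_v$ and $g$ likewise, and then real-interpolate. Interpolating the resonant piece between $L^2$ and the trivial endpoints explains the appearance of $\bar p=\max\{p,p'\}$ (the gain is governed by the exponent farther from $2$, reached through duality), while interpolating the non-resonant piece produces $\underline q=\min\{q,q'\}$; the output exponent is the interpolated $\tfrac1r=\tfrac sq+\tfrac{1-s}{p}$, reflecting that $\rho_\phi$ borrows its $x$-integrability from $f$ on the resonant set and from $g$ on its complement. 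Carrying this dyadically in the $(\tau,\xi)$ frequency (Littlewood--Paley in $(t,x)$), the per-block decay $2^{-ks}$ together with a supremum over blocks gives precisely $B^{s}_{r,\infty}(\mathbb R_t\times\mathbb R^n_x)$; when $p=q\in(1,\infty)$ a Fefferman--Stein / vector-valued Littlewood--Paley square function upgrades the third index from $\infty$ to $\max\{p,2\}$.

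The main obstacle is the non-resonant estimate when $\alpha$ is not an integer: one cannot literally integrate by parts, so the transfer of $(-\Delta_v)^{\alpha/2}$ across the singular factor $1/(\varepsilon\tau+v\cdot\xi)$ and the sharp bound on $\|(-\Delta_v)^{\alpha/2}[\mathbf 1_{\{\cdot\ge\delta\}}\phi/(\varepsilon\tau+v\cdot\xi)]\|$ must be justified through the kernel (singular-integral) representation while tracking the powers of $\delta$ and $|\xi|$ uniformly; the non-smooth cutoff $\mathbf 1_{\{\cdot\ge\delta\}}$ should be replaced by a smooth partition to avoid spurious boundary contributions. The secondary difficulty is purely bookkeeping: propagating the two-parameter optimization correctly through the real interpolation so that the exact indices $\bar p$, $\underline q$ and $r$ emerge, while keeping every constant independent of the macroscopic scale $\varepsilon\le 1$.
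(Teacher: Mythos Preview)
This theorem is not proved in the paper: it is quoted from \cite{DiPerna5} (DiPerna--Lions--Meyer) purely as a benchmark against which the authors compare their own Theorem~\ref{theorem2}. There is therefore no ``paper's own proof'' to compare your proposal against; Section~\ref{section4} contains proofs only of Theorems~\ref{theorem2} and~\ref{theorem6}.

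That said, your sketch is a faithful outline of the original argument in \cite{DiPerna5}: the Fourier transform in $(t,x)$, the resonant/non-resonant split $\{|\varepsilon\tau+v\cdot\xi|\lessgtr\delta\}$, the optimization in $\delta$ giving the $L^2$ exponent $1/(2(\alpha+1))$, and the passage to general $p,q$ via real interpolation with the trivial endpoints are exactly the ingredients of that paper. It is worth emphasizing that the present paper's whole purpose is to \emph{avoid} this decomposition: the commutator method with the multiplier $m_0(\xi,\zeta)=\frac{\xi}{|\xi|}\cdot\frac{\zeta}{(1+|\zeta|^2)^{1/2}}$ replaces the resonant/non-resonant split by a single quadratic-form identity, and in particular never takes a Fourier transform in $t$. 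So your proposal, while a reasonable reconstruction of \cite{DiPerna5}, is methodologically orthogonal to what the paper itself develops.

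One technical caution on your sketch: the handling of the fractional Laplacian on the non-resonant piece is more delicate than you indicate. The heuristic ``each $v$-derivative costs $|\xi|/|\varepsilon\tau+v\cdot\xi|$'' is correct for integer $\alpha$, but for general $\alpha\ge 0$ the actual argument in \cite{DiPerna5} proceeds by interpolating between integer values of $\alpha$ rather than by a direct kernel estimate on $(-\Delta_v)^{\alpha/2}[\phi/(\varepsilon\tau+v\cdot\xi)]$; your proposed singular-integral justification would work but requires care with the rough cutoff, as you note.
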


Under the assumption of Theorem \ref{theorem3}, we start our discussions for the cases when $p=q.$
\begin{itemize}
	
	\item \textit{When $p=q=2$, both Theorem \ref{theorem2} and \ref{theorem3} reach the same regularity $H^{\frac{1}{2(1+\alpha)}}.$} 
	
	\item \textit{When $p=q\in (1,2),$ the result by Theorem \ref{theorem3} implies Theorem \ref{theorem2}:} 
	
	Indeed, Theorem \ref{theorem3} reaches $ B^{\frac{1}{p'(1+\alpha)}}_{p,2},$ while Theorem \ref{theorem2} gives $H^{s}$ for all $s<S=\frac{1}{2(1+\alpha)}\left[1-n(2+\alpha)\left(\frac{2}{p}-1\right)\right].$ By embedding theorem for Besov spaces, $B^{\frac{1}{p'(1+\alpha)}}_{p,2}\subset H^{\tilde{s}}$ with $\tilde{s}=\frac{1}{p'(1+\alpha)}+n\left(\frac{1}{2}-\frac{1}{p}\right)$, which is larger or equal to $S$ for all $n\geq 1$ and $p<2$.
	
	\item \textit{When $p=q\in (2,\infty),$ the result by Theorem \ref{theorem2} has more differentiability but less integrability than Theorem \ref{theorem3}. And when $n=1$ and $\alpha=0,$ Theorem \ref{theorem2} implies Theorem \ref{theorem3}:}
	
	Theorem \ref{theorem3} reaches $ B^{\frac{1}{p(1+\alpha)}}_{p,p},$ while Theorem \ref{theorem2} have $H^{\frac{1}{2(1+\alpha)}}.$ Our result has more differentiability but less integrability as $p>2$. By embedding $H^{\frac{1}{2(1+\alpha)}}\subset B^{\tilde{s}}_{p,2},$ where $\tilde{s}=\frac{1}{2(1+\alpha)}+n\left(\frac{1}{p}-\frac{1}{2}\right).$ And $\tilde{s}< \frac{1}{p(1+\alpha)}$ except when $n=1$ and $\alpha=0,$ where the equality holds. 
\end{itemize}

Because of the quadratic form in our method, one sees the more favorable type of conditions for our method is when $p\geq 2$ and $\frac{1}{p}+\frac{1}{q}=1.$ We therefore compare Theorem \ref{theorem2} and \ref{theorem3} under this assumption: 
\begin{itemize}
	\item \textit{Under the assumption of Theorem \ref{theorem3} with $\frac{1}{p}+\frac{1}{q}=1$ and $p\in(2,\infty)$, the result by Theorem \ref{theorem2} has more differentiability but less integrability in $x$. Moreover, Theorem \ref{theorem2} implies Theorem \ref{theorem3} when $\alpha=0$ by interpolation, or when $0\leq \alpha< \frac{1}{n}$ and $2<p<\frac{2n}{n(1+\alpha)-1}$ by embedding:}
	
	Under these conditions, Theorem \ref{theorem2} results in $  H^{\frac{1}{2(1+\alpha)}}(\mathbb{R}^n_x)$, while Theorem \ref{theorem3} reaches $B^{\frac{1}{p(1+\alpha)}}_{r,\infty}(  \mathbb{R}^n_x),$ where $\frac{1}{r}=\frac{1}{p(1+\alpha)}\left(1-\frac{2}{p}\right)+\frac{1}{p}.$ By the interpolation between $H^{\frac{1}{2(1+\alpha)}}$ and $L^p$, we have $W^{\frac{1}{p(1+\alpha)^2},r}\subset B^{\frac{1}{p(1+\alpha)^2}}_{r,r}.$ This shows when $\alpha=0,$ Theorem \ref{theorem2} implies Theorem \ref{theorem3}.
	
	In the other hand, by embedding $H^{\frac{1}{2(1+\alpha)}}\subset B^{\tilde{s}}_{r,2},$ where $\tilde{s}=\frac{1}{2(1+\alpha)}+n\left(\frac{1}{r}-\frac{1}{2}\right)$. Even with the dimension dependence, there are regions that embedding gives a better  regularity than interpolation. For example when $n=1$, $\tilde{s}\geq \frac{1}{p(1+\alpha)^2}$ when $p\leq 2+\frac{2}{\alpha}$. We compare  $\tilde{s}$ with the regularity obtained by Theorem \ref{theorem3}. In general for each fixed $n,$ $\tilde{s}\geq \frac{1}{p(1+\alpha)}$ when $p\leq \frac{2n}{n(1+\alpha)-1},$ which is compatible with $p>2$ only when $\alpha< \frac{1}{n}$. Hence Theorem \ref{theorem2} implies Theorem \ref{theorem3} when $0\leq \alpha< \frac{1}{n}$ and $2<p<\frac{2n}{n(1+\alpha)-1}$.
\end{itemize}

We now compare our result with \cite{Arsenio3} and \cite{Westdickenberg}, where mixed norm conditions in general dimensions were considered for the stationary transport equation 
\begin{equation}\label{stationary}
v\cdot\nabla_x f=g.
\end{equation} 
We shall take $\varepsilon=0$, in order to compare our theorem with results for (\ref{stationary}).
\begin{theorem}\label{theorem4}\cite{Westdickenberg}
	For $1<p<\frac{n}{n-1},$ if $f\in B^0_{p,q}(\mathbb{R}^{n}_x,L^{p_2}(\mathbb{R}^{n}_v))$ and $g\in  B^{0}_{p,q}(\mathbb{R}^{n}_x,L^{q_2}(\mathbb{R}^{n}_v))$ satisfy (\ref{stationary}), then $\rho_\phi \in B^S_{P,q}(\mathbb{R}^{n}_x),$  where $S=-n+1+\frac{1}{p_2'}\left[1+\frac{1}{q_2}-\frac{1}{p_2}\right]^{-1}$ and $P=\left[\frac{1}{p}-\frac{n-1}{n}\right]^{-1}$.
\end{theorem}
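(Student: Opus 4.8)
The plan is to treat this as a localized dyadic averaging estimate and then sum in the Besov norm. First I would Littlewood--Paley decompose in the space variable, writing $f=\sum_j\Delta_j f$ and $g=\sum_j\Delta_j g$, where $\Delta_j$ restricts the $x$-frequency $\xi$ to $|\xi|\sim 2^j$. Since $v\cdot\nabla_x$ is a Fourier multiplier in $x$, it commutes with $\Delta_j$, so each block $\Delta_j f$ still solves $v\cdot\nabla_x(\Delta_j f)=\Delta_j g$. The whole problem then reduces to a single-frequency estimate of the form $\|\Delta_j\rho_\phi\|_{L^P_x}\lesssim 2^{-jS}\big(\|\Delta_j f\|_{L^p_xL^{p_2}_v}+\|\Delta_j g\|_{L^p_xL^{q_2}_v}\big)$; multiplying by $2^{jS}$ and taking the $\ell^q_j$ norm then reproduces exactly $\|\rho_\phi\|_{B^S_{P,q}}\lesssim\|f\|_{B^0_{p,q}L^{p_2}}+\|g\|_{B^0_{p,q}L^{q_2}}$, with the index $q$ preserved because every block estimate is linear and frequency localized.

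To extract the differentiability $S$, I would work on a fixed block in the $x$-Fourier variable, where the equation reads $i(v\cdot\xi)\widehat f(\xi,v)=\widehat g(\xi,v)$, so that $\widehat\rho_\phi(\xi)=\int\widehat f(\xi,v)\phi(v)\,dv$. Writing $\widehat\xi=\xi/|\xi|$, I would split $\phi$ according to the degenerate slab $\{|v\cdot\widehat\xi|<\delta\}$ and its complement. On the slab I keep $\widehat f$ and apply Hölder in $v$ with $(p_2,p_2')$, using that the slab meets $\operatorname{supp}\phi$ in measure $\lesssim\delta$ to gain $\delta^{1/p_2'}\|\widehat f\|_{L^{p_2}_v}$; off the slab I substitute $\widehat f=\widehat g/(i v\cdot\xi)$ and apply Hölder with $(q_2,q_2')$, where the one-dimensional integral of $|v\cdot\xi|^{-1}=|\xi|^{-1}|v\cdot\widehat\xi|^{-1}$ over $\{|v\cdot\widehat\xi|\ge\delta\}$ contributes $|\xi|^{-1}\delta^{-1/q_2}\|\widehat g\|_{L^{q_2}_v}$. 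Balancing the two contributions over $\delta$ produces the power $|\xi|^{-\sigma}$ with $\sigma=\frac{1}{p_2'}\big[1+\frac1{q_2}-\frac1{p_2}\big]^{-1}$, which is precisely the positive part of $S$. This is the elementary half of the argument.

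The genuinely hard part, and the step I expect to dominate the proof, is the gain of integrability $p\to P$ together with the compensating loss $-(n-1)$ in $S$. This cannot come from Hölder in $v$: the pointwise-in-$\xi$ bound above only controls $\widehat\rho_\phi$, whereas the target $L^P_x$ with $P>p$ records a genuine spreading in $x$. The mechanism is that, as $v$ ranges over $\operatorname{supp}\phi$, the effective frequency $v\cdot\xi$ sweeps an $n$-dimensional set, so the non-degenerate averaging operator behaves like a Fourier integral operator that disperses a single-frequency input across scales. I would establish the fixed-frequency mapping $L^p_x\to L^P_x$ by interpolating between the trivial $L^2_x\to L^2_x$ bound (Plancherel, using the slab estimate above) and a dispersive endpoint of $L^1\to L^\infty$ type obtained from a stationary-phase/kernel bound for the inverse of $v\cdot\xi$ integrated against $\phi$; the codimension-one degeneracy of the slab produces the loss of $n-1$ derivatives, while the transverse dispersion produces the gain in integrability. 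The restriction $1<p<\frac{n}{n-1}$ is exactly the range in which $\frac1p-\frac{n-1}{n}>0$, so that $P=[\frac1p-\frac{n-1}{n}]^{-1}$ is finite and the gain is genuine; outside this range the improvement saturates and a different formulation is needed. Assembling the slab optimization (which fixes $S$ and the frequency decay $2^{-jS}$) with this dispersive single-block bound (which fixes $P$), and then summing dyadically in $\ell^q_j$, would complete the proof.
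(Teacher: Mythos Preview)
This theorem is not proved in the paper: it is quoted from \cite{Westdickenberg} and used only as a benchmark to compare against the authors' own Theorem~\ref{theorem2}. There is therefore no ``paper's own proof'' to compare your proposal against.

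That said, your sketch is a reasonable outline of the classical approach to such results. The Littlewood--Paley reduction, the slab decomposition $\{|v\cdot\widehat\xi|<\delta\}$ versus its complement, and the optimization in $\delta$ that yields the exponent $\sigma=\frac{1}{p_2'}[1+\frac1{q_2}-\frac1{p_2}]^{-1}$ are all standard and correct. Where your proposal is vaguest is precisely the part you flag as hardest: the dispersive $L^p_x\to L^P_x$ gain with loss of $n-1$ derivatives. In Westdickenberg's actual argument this is obtained not via an $L^1\to L^\infty$ stationary-phase endpoint interpolated with $L^2$, but rather through a more direct real-variable analysis of the averaged kernel combined with Bernstein-type inequalities on dyadic blocks; the restriction $1<p<\frac{n}{n-1}$ arises there for the same scaling reason you identify. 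Your interpolation idea is plausible in spirit but would require work to make precise, since the averaging operator at fixed frequency is not quite a standard FIO and the $L^1\to L^\infty$ endpoint you invoke is not immediate.
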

\begin{theorem}\label{theorem5}\cite{Arsenio3}
	When $\frac{4}{3}\leq p\leq 2$, if $f,g\in L^p(\mathbb{R}^{n}_x, L^2(\mathbb{R}^{n}_v))$ satisfy (\ref{stationary}), then $\rho_\phi\in W^{s,p}(\mathbb{R}^{n})$ for all $s< S$, where $S=\frac{1}{2}$ when $n=1,2$, and $S= \frac{1}{2}\left(3-\frac{4}{p}\right)+\frac{n}{4(n-1)}\left(\frac{4}{p}-2\right)$ when $n\geq 3.$ 
\end{theorem}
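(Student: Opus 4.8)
The plan is to prove the $W^{s,p}$ smoothing by a Littlewood--Paley decomposition in $x$ combined with a dispersive (restriction-type) estimate for the velocity-averaging operator, interpolated against the elementary $L^2$ bound. First I would Fourier transform (\ref{stationary}) in $x$, giving $i(v\cdot\xi)\hat f(\xi,v)=\hat g(\xi,v)$, and invert the symbol away from its zero set by splitting, for a parameter $\delta>0$, $\hat\rho_\phi(\xi)=\int_{|v\cdot\hat\xi|<\delta}\phi(v)\hat f(\xi,v)\,dv+\int_{|v\cdot\hat\xi|\ge\delta}\frac{\phi(v)}{i\,v\cdot\xi}\hat g(\xi,v)\,dv$. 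The first, \emph{resonant}, term uses the datum $f$ on the thin velocity slab $\{|v\cdot\hat\xi|<\delta\}\cap\mathrm{supp}\,\phi$ of measure $O(\delta)$, while the second, \emph{non-resonant}, term uses $g$ with the now-bounded multiplier $|v\cdot\xi|^{-1}\lesssim(\delta|\xi|)^{-1}$; this is where both hypotheses $f,g\in L^p_xL^2_v$ enter. Decomposing $\rho_\phi=\sum_k\rho_{\phi,k}$ into dyadic pieces supported on $|\xi|\sim2^k$ and rescaling, the task reduces to a single frequency-localized operator bound, uniform in $k$, with the factor $2^{ks}$ recording the claimed gain $s$ and $\delta=\delta(2^k)$ to be optimized per scale.

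I would then establish this frequency-localized bound at the two endpoints $p=2$ and $p=4/3$ and interpolate, a pairing dictated by the statement itself: writing $u=4/p$, the claimed $S$ is \emph{affine in} $u$, equal to $\tfrac12$ at $p=2$ ($u=2$) and to $\frac{n}{4(n-1)}$ at $p=4/3$ ($u=3$). The $p=2$ endpoint is the classical computation: Cauchy--Schwarz in $v$ bounds the resonant term by $O(\delta^{1/2})$ and the non-resonant term by $(\int_{|v\cdot\hat\xi|\ge\delta}|\phi(v)|^2|v\cdot\xi|^{-2}\,dv)^{1/2}\sim\delta^{-1/2}|\xi|^{-1}$, and balancing at $\delta\sim|\xi|^{-1}$ yields the gain $\tfrac12$, i.e. $\rho_\phi\in H^{1/2}_x$. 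The content is therefore entirely in the dispersive endpoint at $p=4/3$.

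For that endpoint I would represent the (frequency-localized) inversion as an integral along characteristics, $\hat f=\int_0^\infty e^{-s\delta}e^{-is\,v\cdot\xi}\hat g\,ds$, so that the averaging operator becomes a superposition in the characteristic time $s$ of the oscillatory operators $\hat g(\xi,\cdot)\mapsto\int\phi(v)e^{-is\,v\cdot\xi}\hat g(\xi,v)\,dv$. A $TT^{*}$ argument over the $s$-integration reduces the desired $L^{4/3}_xL^2_v\to\dot{W}^{S,4/3}_x$ bound to the decay of the kernel $\int|\phi(v)|^2e^{-i(s-s')\,v\cdot\xi}\,dv$, i.e. to the decay of the Fourier transform of the measure $|\phi|^2\,dv$ as the hyperplanes $\{v\cdot\hat\xi=\mathrm{const}\}$ are swept over $\mathrm{supp}\,\phi$. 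This is a restriction/extension estimate governed by the $n-1$ velocity directions transversal to $\hat\xi$, and its sharp form produces exactly the gain $\frac{n}{4(n-1)}$ at $p=4/3$. Since $\frac{n}{4(n-1)}<\tfrac12$ precisely for $n\ge3$ (and equals $\tfrac12$ at $n=2$, blows up at $n=1$), this also explains the dichotomy: for $n\le2$ the transversal dispersion is too weak to lower the $L^2$ gain, so $S=\tfrac12$ persists, whereas for $n\ge3$ it genuinely degrades the exponent.

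Finally I would interpolate the $p=2$ and $p=4/3$ frequency-localized estimates, sum the dyadic series (using $s<S$ strictly to absorb the geometric loss) and track $\delta(2^k)$ consistently across scales to conclude. The main obstacle is the dispersive endpoint itself: proving the sharp $L^2$-restriction/oscillatory-integral bound for the velocity-hyperplane family with the correct dimension-sensitive exponent $\frac{n}{4(n-1)}$ in $n\ge3$ is the genuinely hard step, while the Littlewood--Paley reduction, the resonant/non-resonant split, the $L^2$ endpoint, and the interpolation are routine by comparison.
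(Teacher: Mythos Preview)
The paper does not prove this statement: Theorem~\ref{theorem5} is quoted verbatim from \cite{Arsenio3} and used only as a point of comparison for the authors' own averaging results. There is no ``paper's own proof'' to compare your proposal against.

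That said, your sketch is a reasonable outline of the strategy in \cite{Arsenio3}: the frequency localization, the resonant/non-resonant split, the $L^2$ endpoint by Cauchy--Schwarz, and then a dispersive endpoint combined with interpolation are indeed the architecture of that paper. Your observation that the claimed $S$ is affine in $4/p$ between $p=2$ and $p=4/3$, with endpoint values $\tfrac12$ and $\tfrac{n}{4(n-1)}$, correctly identifies the interpolation structure. Where your sketch is thinnest is exactly where you flag it: the dispersive endpoint. The actual argument in \cite{Arsenio3} does not proceed via a $TT^*$ reduction to the decay of $\int|\phi(v)|^2 e^{-i(s-s')v\cdot\xi}\,dv$ as you describe; rather it passes through sharp Strichartz-type estimates for the kinetic transport semigroup (the $L^p_x L^2_v \to L^q_t L^r_x$ estimates for $e^{t\,v\cdot\nabla_x}$), whose admissible exponents are governed by the Stein--Tomas restriction range for the sphere in $\mathbb{R}^n$ and produce the exponent $\tfrac{n}{4(n-1)}$ at $p=4/3$. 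Your heuristic of ``$n-1$ transversal directions'' points in the right direction but the kernel you wrote down does not by itself yield that exponent; one really needs the full Strichartz machinery or an equivalent restriction estimate, and that is the substantive content of \cite{Arsenio3}.
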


For the comparison with Theorem \ref{theorem4}, we take $q=2$ for an easier discussion with our $H^s$ result. And since Theorem \ref{theorem4} allows general integrabilities in $v$, let us consider $p_2=q_2'\geq 2$, which is the most favorable condition for our method. 

\begin{itemize}
	
	\item \textit{Under the assumption of Theorem \ref{theorem4} with $n=1$, $q=2$ and $p_2=q_2'\geq 2$. Both Theorem \ref{theorem2} and \ref{theorem4} reach the same regularity when $p=2.$ And Theorem \ref{theorem4} implies Theorem \ref{theorem2} when $p\neq 2$:}
	
	Here Theorem \ref{theorem4} reaches $B^{1/2}_{p,2}$, while Theorem \ref{theorem2} has $H^{1/p'}$ when $p\leq 2$ and $H^{1/2}$ when $p>2$, as mentioned in Remark \ref{remark4}. When $p=2,$ the two results are exactly the same. When $p< 2$, the spaces $B^{1/2}_{p,2}$ and $H^{1/p'}$ have the same scaling, and $B^{1/2}_{p,2}\subset H^{1/p'}$ by embedding. At last for $p>2$, $H^{1/2}\subset B^{1/2}_{p,2}$.
	
	\vspace{10 pt}
	
	Notice for $n\geq 2,$ Theorem \ref{theorem4} no longer applies to $p> 2$, same as Theorem \ref{theorem5}. The restriction $p< 2$ is not the best situation for our method, but the comparison is still interesting under these mixed norm conditions.
	
	\item \textit{Under the assumption of Theorem \ref{theorem4} with $n\geq 2$ (which forces $1<p<2$), $q=2$ and $p_2=q_2'\geq 2$, our result implies Theorem \ref{theorem4}:}
	
	In this case Theorem \ref{theorem4} gets $ B^{3/2-n}_{P,2}$ with $P=\left[\frac{1}{p}-\frac{n-1}{n}\right]^{-1}$, and our method reaches $ H^{\frac{1}{2}\left[1-\frac{2n}{p}+n\right]}$ as mentioned in Remark \ref{remark4}. Our result has more differentiability but less integrability. Moreover, by the embedding $H^{\frac{1}{2}\left[1-\frac{2n}{p}+n\right]} \subset B^{\tilde{s}}_{P,2},$ where $\tilde{s}=\frac{1}{2}\left[1-\frac{2n}{p}+n \right]+n\left(\frac{1}{P}-\frac{1}{2}\right)=\frac{3}{2}-n.$ 
	
	\item \textit{Under the assumption of Theorem \ref{theorem5}, the result by Theorem \ref{theorem2} has more integrability but less differentiability than Theorem \ref{theorem5}. Furthermore, Theorem \ref{theorem5} implies Theorem \ref{theorem2} when $n=1$ and $2,$ but the implication does not hold for $n\geq 3$:} 
	
	Under this assumption, we again have $ H^{s}$ with $s<\frac{1}{2}\left[1-\frac{2n}{p}+n\right]$. For both $n=1$ and $2,$  $W_x^{1/2,p}\subset H_x^{\frac{1}{2}\left[1-\frac{2n}{p}+n\right]}$ by Sobolev embedding. As for $n\geq 3,$ $W^{s,p}_x \subset  H_x^{\tilde{s}}$ where $s= \frac{1}{2}\left(3-\frac{4}{p}\right)+\frac{n}{4(n-1)}\left(\frac{4}{p}-2\right)$ and $\tilde{s}=\frac{1}{2}\left(3-\frac{4}{p}\right)+\frac{n}{4(n-1)}\left(\frac{4}{p}-2\right)+n\left(\frac{1}{2}-\frac{1}{p}\right).$ Notice $\tilde{s}<\frac{1}{2}\left[1-\frac{2n}{p}+n\right]$ for all $p<2$ and $n\geq 3$, so Theorem \ref{theorem5} cannot imply Theorem \ref{theorem2} in this case.
\end{itemize}

%%%%%%%%%%%%%%%%%%%%%%%%%%%%%%%%%%%%%%%%%%%%%%%%%%
\subsection{On the non-degeneracy conditions}
%%%%%%%%%%%%%%%%%%%%%%%%%%%%%%%%%%%%%%%%%%%%%%%%%%%
%
%
The assumption (\ref{condition}) we imposed for Theorem \ref{theorem2} is different from the classical conditions on $a(v)$ in the previous literature, called the non-degeneracy condition:
\begin{definition}
	$a\in Lip(\mathbb{R}^n,\mathbb{R}^m)$ satisfies the \textbf{non-degeneracy condition of order $\nu\in (0,1]$}, if there exists $c_0>0$ such that for all compact set $D\subset\mathbb{R}^n$, 
	\begin{equation}\label{nondeg}
	\mathcal{L}^n(\left\{v\in D: |a(v)\cdot \sigma -\tau|\leq \alpha/2\right\})\leq c_0 \alpha^{\nu},
	\end{equation}
	for all $\sigma\in\mathbb{S}^{m-1}$ and $\tau\in\mathbb{R},$ where $\mathcal{L}^n$ is the Lebesgue measure in $\mathbb{R}^n$.
\end{definition}
Our assumption (\ref{condition}) is stronger than (\ref{nondeg}) with $\nu =1-\frac{1}{\gamma}.$ Indeed, when $n=m$, the assumption $J_{a^{-1}}\in L^\gamma_v$ implies (\ref{nondeg}) with $\nu=1-\frac{1}{\gamma}$, but the other direction holds only when $n=\nu=1$. When $n>1,$ (\ref{nondeg}) only gives restrictions on the pre-images of bands. And when $\nu<1,$ one can construct a Lipschitz function $a_\nu$ on $\mathbb{R}$ satisfying (\ref{nondeg}), and a sequence of measurable sets $\mathcal{O}^i$ such that $\frac{|a_\nu^{-1}(\mathcal{O}^i)|}{|\mathcal{O}^i|^\nu}\rightarrow \infty$ as $i\rightarrow \infty,$ which shows $J_{a^{-1}}\not\in L^\gamma.$ An example of construction can be found in Appendix A. 

The dimension of interests is $n\leq m$ for applications, especially when $n=1$ for scalar conservation laws. In an attempt to weaken the assumption to non-degeneracy condition with general $n\leq m$ cases, we do a different change of variables $v \mapsto \lambda=
a(v)\cdot \frac{\xi}{|\xi|}$, where $\xi$ is the frequency variable of $x,$ and our method can recover the traditional result in $L^2$ for $\nu=1.$
\begin{theorem}\label{theorem6}
	Given $ n\leq m$, $\alpha\geq 0$, $T>0$ and $0<\varepsilon\leq 1.$ Assume $a\in Lip(\mathbb{R}^n,\mathbb{R}^m)$ satisfies the non-degeneracy condition (\ref{nondeg}) with $\nu=1.$ Let $f\in L^\infty([0,T], L^{2}(\mathbb{R}^m_x \times \mathbb{R}^n_v))$ solve (\ref{transport}) for some $g\in L^1([0,T], L^{2}(\mathbb{R}^m_x \times \mathbb{R}^n_v))$, then for any $\phi \in C_c^\infty,$ one has $\rho_\phi(t,x)\in L^2\left([0,T], H^{\frac{1}{2(\alpha+1)}}(B_R(x_0)) \right),$ and
	\[
	\|\rho_\phi\|^2_{L^2\left([0,T], H^{\frac{1}{2(\alpha+1)}}(\mathbb{R}^m_x)\right)} \leq C \left(\|f\|_{L^\infty([0,T], L^{2}(\mathbb{R}^m_x\times \mathbb{R}^n_v)) }^2+\|g\|^2_{L^1([0,T], L^{2}(\mathbb{R}^m_x\times \mathbb{R}^n_v)) }\right),
	\]
	where $C$ only depends on $c_0$ and $Lip(a)$.
\end{theorem}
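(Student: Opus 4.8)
The plan is to specialize the commutator identity (\ref{bound}) to the transport operator $B=a(v)\cdot\nabla_x$, which is skew-adjoint, and to construct a bounded multiplier $Q$ whose commutator $[B,Q]$ is nonnegative and gains a power of $|\xi|$ in the $x$-frequency, exactly as in the derivation of Theorem \ref{theorem1}. The essential new device, forced by the variable and non-square coefficient $a:\mathbb{R}^n\to\mathbb{R}^m$, is to perform the analysis one $x$-frequency direction at a time: after a Fourier transform in $x$ only, the operator acts as multiplication by $i\,a(v)\cdot\xi=i|\xi|\,\psi_\sigma(v)$ with $\psi_\sigma(v):=a(v)\cdot\sigma$ and $\sigma=\xi/|\xi|$, and I would build $Q$ from a one-dimensional multiplier in the scalar variable $\lambda=\psi_\sigma(v)$, i.e.\ the analogue of $m_0$ adapted to the level-set foliation $v\mapsto\lambda$. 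In these variables the commutator with multiplication by $\lambda$ reduces to the same positive symbol computation that produced the lower bound $|\xi|/(1+|\zeta|^2)^{3/2}$ for $a(v)=v$.

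The heart of the argument, and the step I expect to be the main obstacle, is to make this positivity precise uniformly in the direction $\sigma$ for a genuinely nonlinear, in general non-invertible $a$. This is exactly where the non-degeneracy hypothesis (\ref{nondeg}) with $\nu=1$ enters: it guarantees that for every $\sigma$ and every $\tau$ the resonant band $\{v:|\psi_\sigma(v)-\tau|\le \delta\}$ has $v$-measure at most $c_0\delta$, which is the statement that the pushforward of Lebesgue measure under $v\mapsto\lambda$ has bounded density. This bound is what lets me (i) control the $L^2$-operator norm of $Q$ after the change of variables, uniformly in $\sigma$, and (ii) retain a uniform lower bound for the commutator quadratic form when integrating back in $v$. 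I would isolate the one-dimensional estimate along $\lambda$ and then integrate over the $(n-1)$-dimensional level sets, so that only the $\nu=1$ band bound, and not any invertibility of $a$ as in (\ref{condition}), is used.

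To accommodate the singular source $(-\Delta_v)^{\alpha/2}g$ I would truncate $Q=Q_\delta$ at band width $\delta$ (equivalently at $v$-frequency $N\sim\delta^{-1}$). Moving $(-\Delta_v)^{\alpha/2}$ onto $Q_\delta f$ in the source terms of (\ref{bound}), of the form $\int g\,\overline{(-\Delta_v)^{\alpha/2}Q_\delta f}$, costs a factor $\sim\delta^{-\alpha}$ coming from the concentration of $(-\Delta_v)^{\alpha/2}$ at the band edge, while the resonant contribution to $\rho_\phi$ is $\sim\delta^{1/2}$ by the $\nu=1$ band bound, and the non-resonant part, where $\psi_\sigma$ is bounded below by $\delta$, is controlled by the data divided by $|\xi|\delta$. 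Balancing the resonant size $\delta^{1/2}$ against the non-resonant and source size $|\xi|^{-1}\delta^{-1/2-\alpha}$ forces the choice $\delta\sim|\xi|^{-1/(1+\alpha)}$, which converts the gain of one derivative into the fractional gain $s=\tfrac{1}{2(1+\alpha)}$ asserted in the theorem; the factor $\tfrac12$ reflects the quadratic nature of the method, as in Theorem \ref{theorem1}.

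Finally I would pass from the weighted bound on $f$ to the stated estimate on $\rho_\phi=\int f\phi\,dv$ by pairing with $\phi\in C_c^\infty$ and integrating out $v$ through the same $\lambda$-change of variables, and localize to $B_R(x_0)$ with a smooth spatial cutoff, using the observation recorded after Theorem \ref{theorem1} that multiplying $f$ by a compactly supported weight still yields a solution of the transport equation, so the cutoff only generates lower-order commutator terms absorbable into the right-hand side. Since no Fourier transform is taken in $t$ and the identity (\ref{bound}) uses only $\sup_t\int f\,\overline{Qf}\,dx\,dv$, the resulting constant $C$ depends only on $c_0$ and $Lip(a)$ and is independent of $\varepsilon\le 1$, as claimed.
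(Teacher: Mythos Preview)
Your approach is essentially the paper's: for each fixed $\sigma=\xi/|\xi|$ make the scalar change of variables $v\mapsto\lambda=a(v)\cdot\sigma$, apply the one-dimensional commutator with the 1D analogue of $m_0$ (namely $\partial_\lambda G^1_1$), and use the non-degeneracy $\nu=1$ to show that this pushforward preserves $L^2$ norms (the paper isolates this as Proposition~\ref{proposition2}). The localization, the passage back from $h$ to $\rho_\phi$ via compactly supported test functions, and the $\varepsilon$-independence are all as you describe.

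One discrepancy worth flagging: your third paragraph drifts into the classical resonant/non-resonant decomposition (``resonant contribution to $\rho_\phi$ is $\sim\delta^{1/2}$'', ``non-resonant part \ldots controlled by the data divided by $|\xi|\delta$''), which is not the mechanism of the commutator identity~(\ref{bound}). In the paper the singular source is handled by mollifying $\tilde f\phi$ in $v$ at scale $|\xi|^{-s_1}$ \emph{before} the change of variables; this produces, besides the source term $k^1$ (of size $|\xi|^{\alpha s_1}$ after transferring $(-\Delta_v)^{\alpha/2}$ onto the mollifier), an additional regularization commutator $k^2=Com^1=i\int(a(v)-a(w))\cdot\xi\,\tilde f(w)\phi(w)\Phi_{|\xi|^{-s_1}}(v-w)\,dw$ of size $Lip(a)\,|\xi|^{1-s_1}$, since the mollifier forces $|v-w|\lesssim|\xi|^{-s_1}$. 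The optimization is between these two right-hand terms, $\alpha s_1=1-s_1$, giving $s_1=1/(1+\alpha)$ and a gain of $|\xi|^{s_1}$ on $|h|^2$, hence $H^{1/(2(1+\alpha))}$ for $\rho_\phi$. Your parameter choice $\delta\sim|\xi|^{-1/(1+\alpha)}$ is the same, but the balance you wrote belongs to the Golse--Lions--Perthame--Sentis proof rather than to the commutator method; there is no separate resonant piece of $\rho_\phi$ here.
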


This $L^2$ theorem recovers the same regularity $H^{\frac{1}{2(\alpha+1)}}$ in $x$ as in \cite{DiPerna3} and \cite{DiPerna5}. Even though this regularity result is not new, we provide a different approach for proving this theorem. As we mentioned in the discussion after Corollary \ref{corollary1}, some interesting features which are also inherited by Theorem \ref{theorem3} include:
\begin{itemize}
	\item Potential applications to hydrodynamic limits as our results are independent of $\varepsilon$.
	\item The absence of Fourier transform in time variable which enables potential extensions of our method for time-discretized or stochastic kinetic equations.
\end{itemize} 

\begin{remark}
	We were unable to obtain a $L^p$ statement as we did in Theorem \ref{theorem2}. This is because the natural multiplier for the alternate proof here is not a Calderon-Zygmund operator, and we lose bounds in general $L^p$ spaces. 
	In fact, when $a(v)=v$, the corresponding multiplier would be in the form of $S\left(\frac{\xi}{|\xi|}\cdot\zeta\right),$ where $\zeta$ is the frequency variable of $v$. If $S$ is smooth, the inverse Fourier transform of this type of "directed multiplier" in two-dimension is in the form of $\frac{x\cdot v}{|x|^3} \tilde{S}\left(\frac{x^\perp\cdot v}{|x|}\right),$ which is not bounded on $L^p_{x,v}.$
\end{remark}

\begin{remark}
	We use the non-degeneracy condition as a constraint on the measures of pre-images of intervals. We extend this condition from intervals to general measurable sets, so that this is equivalent to a constraint on the determinant of Jacobian matrices and a proof similar to the one of Theorem \ref{theorem2} follows. But when $\nu <1,$  the extension from intervals to measurable sets fails (see the counterexample in the Appendix) and hence the strategy is not applicable directly here. 
\end{remark}

%%%%%%%%%%%%%%%%%%%%%%%%%%%%%%%%%%%%%%%%%%%%%%%%%%%%%
%%%%%%%%%%%%%%%%%%%%%%%%%%%%%%%%%%%%%%%%%%%%%%%%%%%%%%%%%
\section{An example of future perspective: Regularizing effects for measure-valued solutions to scalar conservation law}\label{section3}
%%%%%%%%%%%%%%%%%%%%%%%%%%%%%%%%%%%%%%%%%%%%%%%%%%%%%
%%%%%%%%%%%%%%%%%%%%%%%%%%%%%%%%%%%%%%%%%%%%%%%%%%%%%%%%%%
Among several potential applications of the new method for averaging lemmas presented here, this section focuses on the regularity of so-called measure-valued solutions of conservation laws and in particular scalar conservation laws.

Scalar conservation laws can be viewed as a simplified model of hyperbolic systems which still captures some of the basic singular structure. They read 
\begin{equation}\label{conservation}
\begin{cases}
\partial_t u + \sum_{i=1}^n \partial_{x_i}A_i(u)=0,\\
u(t=0,x)=u_0(x),
\end{cases}
\end{equation}
where $u(t,x):\mathbb{R}^+\times\mathbb{R}^n\rightarrow \mathbb{R}$ is the scalar unknown and $A: \mathbb{R} \rightarrow \mathbb{R}^n$ is a given flux.

The concept of measure-valued solutions to hyperbolic systems such as \eqref{conservation} had already been introduced in \cite{DiPerna1}. It has recently seen a significant revival of interest as measure-valued solutions offer a more statistical description of the dynamics, see in particular \cite{FKMT,Fjordholm}. 

It is convenient to define measure-valued solution through the kinetic formulation of \eqref{conservation}, which also allows for a straightforward application of our results. A scalar function $u(t,x)\in L^\infty(\R_+,\ L^1(\R^n)$ corresponds to a measure-valued solution if there exists $f(t,x,v)\in L^\infty(\R_+\times\R^n\times \R)$ with the constraint
\begin{equation}
u(t,x)=\int_\R f(t,x,v)\,dv,\quad -1\leq f\leq 1,\label{measureval}
\end{equation}
and if $f$ solves the kinetic equation
\begin{equation}
\partial_t f+a(v)\cdot\nabla_x f=\partial_v m,
\label{kinetic}
\end{equation}
for $a(v)=A'(v)$ and any finite Radon measure $m$. If $u$ is obtained as a weak-limit of a sequence $u_n$ then $f$ includes some information on the oscillations of $u_n$ since it can directly be obtained from the Young measure $\mu$ of the sequence
\[
f(t,x,v)=\int_0^v \mu(t,x,dz).
\]
The system \eqref{measureval}-\eqref{kinetic} is hence immediately connected to the notion of kinetic formulation for scalar conservation laws introduced in the seminal article \cite{Lions2} and extended to isentropic gas dynamics in \cite{Lions3}. If $u$ is an entropy solution to \eqref{conservation}, then one may define
\begin{equation}
f(t,x,v)=\left\{\begin{aligned}
& 1\quad\mbox{if}\ 0\leq v\leq u(t,x),\\
& -1\quad\mbox{if}\  u(t,x)\leq v <0,\\
& 0\quad \mbox{otherwise},
\end{aligned}\right.
\label{kineticformulation}
\end{equation}
and $f$ solves the kinetic equation \eqref{kinetic} with the additional constraint that $m\geq 0$ which corresponds to the entropy inequality.

We refer for example to \cite{Perthame1} for a thorough discussion of kinetic formulations and their usefulness, such as recovering the uniqueness of the entropy solution first obtained in \cite{Kr}.

The use of kinetic formulations has proved effective in particular in obtaining regularizing effects for scalar conservation laws. In one dimension and for strictly convex flux,  Oleinik \cite{Oleinik} proved early that entropy solutions are regularized in $BV$. In more than one dimension and for more complex flux that are still non-linear in the sense of \eqref{nondeg} with $\nu=1$, a first regularizing effect had been obtained in \cite{Lions2} yielding $u\in W^{s,p}$ for all $s<1/3$ and some $p>1$.

Such regularizing effects actually do not use the sign of $m$ and for this reason hold for any weak solution to \eqref{conservation} with bounded entropy production. Among that wider class a counterexample constructed in \cite{DeLellis} proves that solutions cannot in general be expected to have more than $1/3$ derivative. The optimal space $(B_{1/3,3}^\infty)_{x,loc}$ was eventually derived in  \cite{Golse4}. Whether a higher regularity actually holds for entropy solutions (instead of only bounded entropy production) remains a major open problem though.

It had been observed in \cite{Jabin2} that the regularizing effect for the kinetic formulation relies in part in the regularity of the function $f$ defined by \eqref{kineticformulation}: For example such an $f$ belongs to $L^\infty(\R_+\times \R^n,\ BV(\R))$. Unfortunately such additional regularity is lost for measure-valued solutions since we only have $f\in L^1\cap L^\infty$ by \eqref{measureval}.

A priori, one may hence only apply the standard averaging result from \cite{DiPerna5} directly on \eqref{kinetic}. Assuming non-degeneracy of the flux, {\em i.e.} \eqref{nondeg} with $\nu=1$, we may apply Theorem~\ref{DiPerna-Lions}  for any $\alpha>1$, $g\in L^1$ and $f\in L^2$ (the optimal space for this theorem). One then deduces that if $u$ corresponds to a measure-valued solution with $f$ compactly supported in $v$ then $u\in B^s_{5/3,2}$ for any $s<1/5$.

However we are then making no use of the additional integrability of $f$. Instead one may also apply our new result Theorem~\ref{maintheorem} to (\ref{kinetic}) with
\begin{corollary}
	Let $f$ satisfy \eqref{measureval} and solve \eqref{kinetic} for some finite Radon measure $m$ and some $a: \R^n\to\R^n$ with \eqref{condition} for $\gamma=\infty$. Assume moreover that $f\in L^\infty([0,\ T],\ L^1(\R^n\times\R^n))$ and is compactly supported in velocity. Then $u\in L^2([0,\ T],\ H^s(\R^n))$ for any $s<1/4$.
	\label{cor:measureval}
\end{corollary}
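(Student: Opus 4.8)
The plan is to apply Theorem~\ref{maintheorem} directly to the kinetic equation \eqref{kinetic}, after choosing the exponents so that the regularity threshold $S$ of that theorem can be pushed arbitrarily close to $1/4$. First I would reduce $u$ to a velocity average: since $f$ is compactly supported in velocity, say $\mathrm{supp}_v f\subset B_V$, I pick $\phi\in C_c^\infty(\R^n_v)$ with $\phi\equiv 1$ on $B_V$, so that $\rho_\phi(t,x)=\int f\phi\,dv=\int f\,dv=u(t,x)$. Hence a bound on $\rho_\phi$ is exactly a bound on $u$, and it suffices to verify the hypotheses of Theorem~\ref{maintheorem} for \eqref{kinetic} with $\alpha$ slightly larger than $1$.

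Next I would place $f$ and $m$ in the appropriate spaces. The assumption $\gamma=\infty$ gives $\ell=1$ in \eqref{d3}. From $-1\leq f\leq 1$, the bound $f\in L^\infty([0,T],L^1(\R^n\times\R^n))$, and the compact velocity support, one gets $f\in L^\infty([0,T],L^{p_1}(\R^n_x,L^{p_2}(\R^n_v)))$ for \emph{all} $p_1,p_2\in[1,\infty]$. The crucial choice is $p_1=p_2=\infty$, which exploits the $L^\infty$ control of $f$ that the classical result in Theorem~\ref{theorem3} cannot use (its prefactor kills the $L^\infty$ endpoint). With $p_1=p_2=\infty$, $\ell=1$, and $q_1=1$, a direct computation in \eqref{d3}--\eqref{d2} yields $d_1=d_2=d_3=d_4=0$ for every $q_2\geq 1$, so that $\theta=\frac{1}{\alpha+1}$ and
\[
S=\tfrac{1}{2}\,\tfrac{1}{\alpha+1}=\tfrac{1}{2(\alpha+1)}.
\]

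The main work is to write the measure source in the form required by \eqref{transport}. I would set $g:=(-\Delta_v)^{-\alpha/2}\partial_v m$, so that $(-\Delta_v)^{\alpha/2}g=\partial_v m$ and the velocity multiplier of $g$ is $i\zeta|\zeta|^{-\alpha}=i\tfrac{\zeta}{|\zeta|}|\zeta|^{-(\alpha-1)}$, i.e.\ a Riesz transform composed with the Riesz potential of order $\alpha-1$. The equation together with the compact velocity support of $f$ forces $m$ to be compactly supported in $v$, and $m$ is a finite measure, so $\|m\|_{\mathcal M}<\infty$. For $\alpha>1$ the potential gains $\alpha-1$ derivatives: mollifying $m$ and using Hardy--Littlewood--Sobolev together with the $L^{q_2}$-boundedness of the Riesz transform ($1<q_2<\infty$), I would show, uniformly in the mollification, that $g$ lies locally in $v$ in $L^1([0,T],L^1(\R^n_x,L^{q_2}(\R^n_v)))$ with norm $\lesssim\|m\|_{\mathcal M}$, for any $1<q_2<\frac{n}{n+1-\alpha}$; here $q_1=1$ because $m$ carries no regularity in $x$. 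Passing to the limit and using the uniform bound of Theorem~\ref{maintheorem}, I obtain $\rho_\phi\in L^2([0,T],H^s(B_R))$ for all $s<S=\frac{1}{2(\alpha+1)}$. Letting $\alpha\downarrow 1$ sends $S\uparrow 1/4$, so for each $s<1/4$ one fixes $\alpha>1$ with $\frac{1}{2(\alpha+1)}>s$ and a corresponding $q_2\in(1,\frac{n}{n+1-\alpha})$. Finally, since the constant in Theorem~\ref{maintheorem} is independent of the center $x_0$, and since $|f|\leq1$ with compact $v$-support gives $u\in L^\infty([0,T],(L^1\cap L^\infty)(\R^n_x))\subset L^2([0,T],L^2(\R^n_x))$, a covering argument (splitting the $H^s$ seminorm into $|x-y|<R$, controlled by the uniform local estimates, and $|x-y|\geq R$, controlled by $\|u\|_{L^2}$) upgrades the local bound on balls to $u\in L^2([0,T],H^s(\R^n))$.

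The hardest point is the representation of the measure source. Because $m$ is only a measure and a genuine $v$-derivative $\partial_v m$ appears, $g$ is never globally in a single $L^{q_2}_v$: the Riesz potential of a compactly supported measure is locally singular and has a slowly decaying tail, and these require incompatible exponents. The remedy, which I expect to be the delicate step, is the localization in velocity inherited from multiplying the equation by $\phi$ in the commutator method, so that only \emph{local} velocity norms of $g$ enter; this is what allows $q_2$ and $\alpha$ to be taken just above $1$ simultaneously. It is precisely the combination of this localization with the choice $p_1=p_2=\infty$ that raises the exponent from the classical $1/5$ obtainable through Theorem~\ref{theorem3} to the value $1/4$.
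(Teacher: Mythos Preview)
Your proposal is correct and follows precisely the approach the paper intends: the corollary is stated as a direct application of Theorem~\ref{maintheorem}, and you have supplied the details the paper omits, namely the choice $p_1=p_2=\infty$, $q_1=1$, $\alpha\downarrow 1$, which yields $S=\frac{1}{2(\alpha+1)}\uparrow \frac14$, together with the observation that the $\phi$-localization built into the proof of Theorem~\ref{maintheorem} means only local $L^{q_2}_v$ control of $g$ is required.

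One minor technical remark: the tools you name (Hardy--Littlewood--Sobolev and $L^{q_2}$-boundedness of the Riesz transform) do not apply directly, since your input $m$ is only in $L^1$ (or $\mathcal{M}$) and both HLS and the Riesz transform fail at that endpoint. The clean way to obtain the local bound is to note that the combined kernel of $\partial_v(-\Delta_v)^{-\alpha/2}$ behaves like $|v|^{-(n+1-\alpha)}$, which lies in $L^{q_2}_{\mathrm{loc}}$ precisely for $q_2<\frac{n}{n+1-\alpha}$; Young's inequality against $m\in\mathcal{M}_v$ then gives $\|g\|_{L^{q_2}_v(B_V)}\lesssim\|m\|_{\mathcal{M}_v}$ directly. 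This yields exactly the range of $q_2$ you state and avoids the endpoint issue. Alternatively, and perhaps more in the spirit of the paper's own computations, one may observe that the proof of Theorem~\ref{maintheorem} goes through verbatim with $\partial_v$ in place of $(-\Delta_v)^{1/2}$ (the derivative is simply transferred to the mollifier $\Phi$), so one can take $\alpha=1$ and $g=m\in L^1$ after mollification, reaching $S=\frac14$ without the detour through $\alpha>1$.
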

In dimension $1$, Corollary \ref{cor:measureval} directly applies to measure-valued solutions and improve the regularity from almost $B^{1/5}_{5/3,2}$ in $x$
to almost $H^{1/4}$. In higher dimensions, as we observed, we cannot directly replace \eqref{condition} with \eqref{nondeg}. Therefore a better understanding of the regularity of measure-valued solutions is directly connected to further investigations of what should replace \eqref{condition} if $a:\R^m\to\R^n$ with $m<n$.  
%%%%%%%%%%%%%%%%%%%%%%%%%%%%%%%%%%%%%%%
%%%%%%%%%%%%%%%%%%%%%%%%%%%%%%%%%%%%%%%
\section{Proofs }\label{section4}
%%%%%%%%%%%%%%%%%%%%%%%%%%%%%%%%%%%%%%%%
\subsection{Proof of Theorem \ref{theorem2}}
%%%%%%%%%%%%%%%%%%%%%%%%%%%%%%%%%%%%%%%%%%
%%%%%%%%%%%%%%%%%%%%%%%%%%%%%%%%%%
\subsubsection{Main proof}
%%%%%%%%%%%%%%%%%%%%%%%%%%%%%%%%%%
The proof contains mainly three steps as follows.

\textit{ \textbf{Step 1:} Preparations: localization, regularization and change of variables.}

As the result is local, we assume $f$ is compactly supported in $x$ for convenience. Fix a compactly supported function $\phi(v) \in W^{\alpha,\infty}_v$. Without loss of generality, assume $supp(\phi)\subseteq B(0,1).$ Consider $f \phi,$ which satisfies
$$
\varepsilon \partial_t(f\phi)+a(v) \cdot \nabla_x (f\phi)=(-\Delta_v)^{\alpha/2}g\phi.
$$ 
We denote the Fourier transform of $f$ in $x$ by $\tilde{f}.$ Fix a smooth function $\Phi(v)$ with $supp(\Phi)\subseteq B(0,1)$. Consider $F_{s_1}=(\tilde{f}\phi)\star_v \Phi_{|\xi|^{-s_1}}$, where 
$\Phi_{|\xi|^{-s_1}}(v)=|\xi|^{ns_1} \Phi(v|\xi|^{s_1})$ with $s_1\geq 0$ to be decided later.  Notice
$$
supp(F_{s_1})\subseteq \overline{supp(\phi)+ supp(\Phi_{|\xi|^{-s_1}})}\subseteq \overline{ B(0,1+|\xi|^{-s_1})}\subset \overline{B(0,2)}
$$
is of compact support for all $|\xi|\geq 1$. And it satisfies
\begin{equation}\label{E:reg}
\varepsilon \partial_t F_{s_1} + ia(v) \cdot \xi F_{s_1}=
((-\Delta_v)^{\alpha/2}\tilde{g}\phi)\star_v \Phi_{|\xi|^{-s_1}}+Com^1,
\end{equation}
where $F_{s_1}=(\tilde{f}\phi)\star_v \Phi_{|\xi|^{-s_1}}$ and the commutator term
$$
Com^1(v)=i\int (a(v)-a(w))\cdot \xi \tilde{f}(w)\phi(w)\Phi_{|\xi|^{-s_1}}(v-w)\, dw.
$$

Note the usage of localization in $v$ will be more clear in the last step of our proof.

By change of variables $v\mapsto v'=a(v)$, (\ref{E:reg}) can be  rewritten as 
\begin{equation}\label{E:h}
\varepsilon \partial_t h +i v' \cdot \xi  h = k^1 +k^2 
\end{equation}
in the sense of distribution, where $h, k^1$ and $k^2$ are defined as follows:
$$
\int h (v') \psi(v')\, dv'=\int F_{s_1} (v) \psi(a(v)) \, dv .
$$
$$
\int k^1(v')\psi(v')\, dv'=\int \left[((-\Delta_v)^{\alpha/2}\tilde{g}\phi)\star_v \Phi_{|\xi|^{-s_1}} \right](v)\psi(a(v))\, dv,
$$
and
$$
\int k^2(v')\psi(v')\, dv'=\int Com^1(v) \psi(a(v)) \, dv.
$$
\textit{\textbf{Step 2:} Commutator method with $m_0$ on $h.$}
Consider a smooth radial bump function $\chi(\xi)$ with support on $\frac{1}{2}<|\xi|<2,$ such that $\sum_{k\in\mathbb{Z}}\chi(2^{-k}\xi)\equiv 1,$ for all $\xi\neq 0.$ For each $k\in\mathbb{N},$ we apply commutator method with $m_0$ on $h(v')\chi\left(2^{-k}\xi\right)$ and get
\begin{equation}\label{comm_h}
\begin{split}
&\,\,\,\,\,\,\,\int |\mathcal{F}_{\zeta'}(h)|^2(\zeta') \frac{2^k}{(1+|\zeta'|^2)^{3/2}}  \, d\zeta' \, dt \, \chi(2^{-k}\xi) d\xi\\
&\lesssim \int \xi\cdot\nabla_{\zeta'}m_0(\xi,\zeta')|\mathcal{F}_{\zeta'}(h)|^2 \, \chi(2^{-k}\xi) d\xi \, d\zeta' \, dt\\
&= \int \bar{h}(v')(\frac{1}{i}\frac{\xi}{|\xi|}\cdot\nabla_{v'} G_1^n \star_{v'} h) \, dv' \chi(2^{-k}\xi)\, d\xi|_{t=0}^{t=T}\\
& \,\,\,\,\,\,\,\,\,\,\,\,  +Re\int \bar{h}(v') \frac{\xi}{|\xi|}\cdot\nabla_{v'} G_1^n \star_{v'} \left[(k^1+k^2) \right] \, dv' \chi(2^{-k}\xi)\, d\xi \, dt\\
&:=A_k 
\end{split}
\end{equation}
We estimate $A_k$ and get 
\begin{lemma}\label{lemma1}
	Denote $\mathcal{F}^{-1}_x(\chi(2^{-k}\xi) \tilde{f}\phi) $ by $f_k$ and $\mathcal{F}^{-1}_x(\chi(2^{-k}\xi) \tilde{g}\phi) $ by $g_k$. Let $p_1,p_2,q_1,q_2\in (1,\infty].$ Then for each fixed $k\in\mathbb{N}$ ,
	\begin{equation}\label{Ak}
	\begin{split}
	|A_k|&\lesssim 2^{kd_4 +ks_1 d_2 }\|f_k\|^2_{ L^{p_1}_{x}L^{p_2}_{v}}|_{t=0}^{t=T}\\
	&+2^{kd_3+ks_1 d_1 +k\alpha s_1} \int \|f_k\|_{L^{p_1}_{x}L^{p_2}_{v}}\|g_k\|_{L^{q_1}_{x}L^{q_2}_{v}} \, dt\\
	&+2^{kd_4+ks_1 d_2+k(1-s_1)}\int \|f_k\|_{L^{p_1}_{x}L^{p_2}_{v}}^2\, dt.
	\end{split}
	\end{equation}
	where $d_1=\max{ \left\{n\left(\frac{1}{p_2}+\frac{1}{q_2}-\ell\right) ,0  \right\}   }$, $d_2=\max{ \left\{n\left(\frac{2}{p_2}-\ell\right) ,0  \right\}   }$, with $\ell=\frac{\gamma-2}{\gamma-1}$,  and $d_3=\max{ \left\{n\left(\frac{1}{p_1}+\frac{1}{q_1}-1\right) ,0  \right\}   }$, $d_4=\max{ \left\{n\left(\frac{2}{p_1}-1\right) ,0  \right\}   }$.
\end{lemma}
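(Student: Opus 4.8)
The plan is to split $A_k$ into the three contributions matching the three terms on the right of \eqref{Ak}: the boundary term evaluated at $t=0,T$, the source term carrying $k^1$ (which originates from $g$), and the source term carrying $k^2$ (the commutator $Com^1$). The common engine is that the operator $\frac{\xi}{|\xi|}\cdot\nabla_{v'}G^n_1\star_{v'}$ has Fourier symbol $i\frac{\xi}{|\xi|}\cdot\frac{\zeta'}{(1+|\zeta'|^2)^{1/2}}$, a Mikhlin multiplier, so it is bounded on $L^r_{v'}$ for every $1<r<\infty$ with norm independent of the direction $\xi/|\xi|$. For each of the three pieces I would first estimate the $v'$-bilinear form at fixed $\xi$ by H\"older in $v'$ together with this uniform boundedness, then transport the resulting $v'$-norms of $h$ back to $v$-norms of $F_{s_1}$ through the change of variables $v'=a(v)$, then trade the mollification for powers of $|\xi|\sim 2^k$ by Young's inequality, and finally pass from the $\xi$-localized $L^2_x$ information to the mixed norm $L^{p_1}_xL^{p_2}_v$ using Plancherel and Bernstein's inequality.

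The heart of the $v$ estimate, and the source of $\ell=\frac{\gamma-2}{\gamma-1}$ together with $d_1,d_2$, is the change of variables. Since $h=(F_{s_1}\circ a^{-1})\,J_{a^{-1}}$, a direct computation gives $\|h\|_{L^2_{v'}}^2=\int |F_{s_1}(v)|^2\,J_{a^{-1}}(a(v))\,dv$; applying H\"older with conjugate exponents $(\tfrac{\gamma-1}{\gamma-2},\gamma-1)$ and changing variables once more turns the Jacobian factor into $\|J_{a^{-1}}\|_{L^\gamma}$, finite by \eqref{condition}, and leaves $\|F_{s_1}\|_{L^{2/\ell}_v}^2$. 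Young's inequality for $F_{s_1}=(\tilde f\phi)\star_v\Phi_{|\xi|^{-s_1}}$ then yields $\|F_{s_1}\|_{L^{2/\ell}_v}\lesssim |\xi|^{ns_1(\frac1{p_2}-\frac\ell2)}\|\tilde f\phi\|_{L^{p_2}_v}$ whenever $2/\ell\ge p_2$, so squaring produces exactly the factor $2^{ks_1 d_2}$ with $d_2=\max\{n(\frac2{p_2}-\ell),0\}$; the $\max$ with $0$ covers the complementary range $2/\ell<p_2$, where the compact $v$-support of $\phi$ lets one dominate the $L^{2/\ell}_v$ norm by the $L^{p_2}_v$ norm with no loss in $|\xi|$. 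The mixed index $d_1$ arises identically when one factor of the bilinear form is $F_{s_1}$ (controlled in $L^{p_2}_v$) and the other comes from $g$ (controlled in $L^{q_2}_v$), the same Jacobian bookkeeping landing on exponents summing to $\ell$.

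For the $x$ variable I would, after the $v'$ step has reduced each piece to quantities of the form $\int_\xi\|\cdot\|_{L^{p_2}_v}^2\chi(2^{-k}\xi)\,d\xi$, use Plancherel in $x$ to recognize $L^2_x$ norms of the Littlewood--Paley pieces $f_k,g_k$ and then Bernstein's inequality for functions frequency-localized at $|\xi|\sim 2^k$ to exchange $L^2_x$ for $L^{p_1}_x$ (resp. $L^{q_1}_x$), which costs $2^{kn(\frac1{p_1}-\frac12)}$ (resp. $2^{kn(\frac1{q_1}-\frac12)}$) when the target index is below $2$ and nothing otherwise; this is precisely $2^{kd_4}$ for the two $f^2$ pieces and $2^{kd_3}$ for the mixed $f$--$g$ piece. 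Two further elementary gains complete the source terms: in $k^1$ I move $(-\Delta_v)^{\alpha/2}$ off $\tilde g\phi$ and onto the mollifier, and by scaling $\|(-\Delta_v)^{\alpha/2}\Phi_{|\xi|^{-s_1}}\|$ carries an extra factor $|\xi|^{\alpha s_1}$, producing $2^{k\alpha s_1}$; in $k^2$ the Lipschitz bound $|a(v)-a(w)|\lesssim Lip(a)|v-w|$ combined with $|v-w|\lesssim |\xi|^{-s_1}$ on the support of $\Phi_{|\xi|^{-s_1}}$ and $|\xi|\sim 2^k$ gives $|(a(v)-a(w))\cdot\xi|\lesssim 2^{k(1-s_1)}$, so $Com^1$ behaves like $2^{k(1-s_1)}$ times a mollified copy of $\tilde f\phi$ and contributes the last term.

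The step I expect to be most delicate is not any single inequality but the interplay of the mixed norms: after the $v'$-bilinear estimate one is left with $\int_\xi\|\tilde f(\xi)\phi\|_{L^{p_2}_v}^2\,\chi(2^{-k}\xi)\,d\xi$, and recognizing this as a power of $\|f_k\|_{L^{p_1}_xL^{p_2}_v}$ requires interchanging the $L^2_\xi$ integration with the $L^{p_2}_v$ norm by Minkowski's integral inequality. This interchange is lossless only away from the endpoints, which is exactly why the hypothesis restricts $p_1,p_2,q_1,q_2$ to $(1,\infty]$ and why the sharper endpoint $s=S$ of Remark~\ref{remark4} must be recovered under the Besov assumption $B^0_{\cdot,2}$ in $x$. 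Keeping every exponent in its admissible range through the three successive applications of H\"older, Young, and Bernstein, so that the powers of $2^k$ assemble exactly into $d_1,\dots,d_4$, is the main bookkeeping obstacle.
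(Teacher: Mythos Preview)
Your outline is correct and tracks the paper's proof closely: the same three-term split of $A_k$, the same use of the Calder\'on--Zygmund boundedness of the multiplier $m_0$, the same change-of-variables bound controlled by $J_{a^{-1}}\in L^\gamma$ (your direct Jacobian computation with H\"older exponents $(\tfrac{\gamma-1}{\gamma-2},\gamma-1)$ is exactly the content of Proposition~\ref{proposition1}), the same Young in $v$ and Bernstein/Young in $x$ producing the $d_i$, and the same handling of $k^1$ (pass $(-\Delta_v)^{\alpha/2}$ onto the mollifier for the factor $2^{k\alpha s_1}$) and $k^2$ (Lipschitz bound on $a$ plus the support of $\Phi_{|\xi|^{-s_1}}$ for the factor $2^{k(1-s_1)}$).

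One correction on the step you single out as delicate. The Minkowski interchange of $L^2_\xi$ with $L^{p_2}_v$ goes the \emph{wrong} way when $p_2<2$, so as written that step fails in part of the admissible range. The paper avoids this altogether by passing to physical $x$-space \emph{before} the change of variables: it bounds $\|\mathcal F^{-1}_x(h\,\chi(2^{-k}\xi))\|_{L^2_{x,v'}}$, applies Proposition~\ref{proposition1} at each fixed $x$ to reach $\|S_{2^{-k}}\star_x f_k\star_v\Phi_{2^{-ks_1}}\|_{L^2_xL^{2/\ell}_v}$, and then uses \emph{vector-valued} Young (convolution with $S_{2^{-k}}$ in $x$, then with $\Phi_{2^{-ks_1}}$ in $v$) to land directly on $\|f_k\|_{L^{p_1}_xL^{p_2}_v}$ with no norm interchange needed. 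Relatedly, your attribution of the $(1,\infty]$ restriction and of the Besov endpoint is off: the exclusion of $p=1$ comes from the Calder\'on--Zygmund bound on $m_0$ (the paper remarks it can be absorbed with a harmless logarithm), and the endpoint $s=S$ in Remark~\ref{remark4} is about $\ell^2_k$-summability of $\|f_k\|$ and $\|g_k\|$ when one sums \eqref{bound_k} over $k$, not about Minkowski.
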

To minimize the order of $\xi$ of the sum in (\ref{Ak}), we choose 
$$
s_1=\min\left\{\frac{1-(d_3-d_4)}{\alpha+1 +(d_1-d_2)},1 \right\},
$$
and so the highest order is $1-S$, where $S=s_1( 1-d_2)-d_4$.

Divide the whole inequality (\ref{comm_h}) with $2^{k(1-S+\delta) }$ for any small $\delta>0,$ then we attain
\begin{equation}\label{bound_k}
\begin{split}
&\int |\mathcal{F}_{\zeta'}(h)|^2 \frac{|\xi|^{(S-\delta)}}{(1+|\zeta'|^2)^{3/2}}  \, d\zeta' \, dt \, \chi(2^{-k}\xi) d\xi\\
&\lesssim 2^{-k\delta }\left[\|f_k\|^2_{ L^{p_1}_{x}L^{p_2}_{v}}|_{t=0}^{t=T}+ \int \|f_k\|_{L^{p_1}_{x}L^{p_2}_{v}}\|g_k\|_{L^{q_1}_{x}L^{q_2}_{v}} \, dt+ \int \|f_k\|_{L^{p_1}_{x}L^{p_2}_{v}}^2 \, dt\right],
\end{split}
\end{equation} 
for all $k\in\mathbb{N}.$ 

The same inequality can be obtained even if any of $p_1,p_2,q_1,q_2$ is equal to $1$, because the additional logarithm appears from the weak boundedness of Calderon-Zygmund operator would not affect the  argument.

Sum over $k\in \mathbb{N}$ for (\ref{bound_k}), we get 
\[
\begin{split}
&\int \chi_0(\xi)|\xi|^{s} \bar{h }(v)G^n_{3}(v-w)h (w)\, dw \, dv \, dt \, d\xi \\
&\qquad\qquad
\lesssim \|f\|^2_{L^\infty\left([0,T], L^{p_1}_{x}L^{p_2}_{v}\right)}+\|g\|^2_{L^1\left([0,T], L^{q_1}_{x}L^{q_2}_{v}\right)},
\end{split}
\]
with $s<S=(1-d_2)\min\left\{\frac{1-(d_3-d_4)}{\alpha+1 +(d_1-d_2)},1 \right\}-d_4$, and $\chi_0(\xi):=\sum_{k\in\mathbb{N}}\chi(2^{-k}\xi)$. 

The last step is to translate the quadratic form of $h$ back to a norm of velocity average of $f.$

\textit{\textbf{Step 3:} Derive result back to $f.$}

With the change of variables again we have
\[\begin{split}
&\int \int \left|\int  F_{s_1} (v)\psi(a(v)) \, dv \right|^2 |\xi|^{s} \, d\xi \, dt \\
&\qquad\qquad=\int \int \left|\int  h (v')\psi(v') \, dv' \right|^2 |\xi|^{s} \, d\xi \, dt <\infty,
\end{split}
\]
for all $\psi\in H^{3/2}$. By the assumptions that $\phi$ and $\Phi$ are compactly supported in $v$, one can show
\begin{lemma}\label{lemma2}
	There exists $\psi\in H^{3/2}$ such that 
	\[
	\int_0^T \int_{|\xi|\geq 1} \left|\int  \tilde{f}\phi \, dv \right|^2 |\xi|^{s} \, d\xi\, dt\lesssim \int \int \left|\int  F_{s_1} (v)\psi(a(v)) \, dv \right|^2 |\xi|^{s} \, d\xi \, dt<\infty
	\]
	for all $s<S.$
\end{lemma}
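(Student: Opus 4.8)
The plan is to choose $\psi$ to be a single, fixed smooth cutoff that equals $1$ on the image under $a$ of the common velocity support of the whole family of regularized profiles $F_{s_1}$. Then the weight $\psi(a(v))$ acts as the constant $1$ inside the integral, the pairing $\int F_{s_1}\psi(a(v))\,dv$ collapses to the bare velocity integral $\int F_{s_1}\,dv$, and the mollification reproduces the velocity average of $\tilde f\phi$. The whole lemma is thus an exact identity on $|\xi|\geq 1$ rather than a genuine inequality.

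First I would record the uniform support property already established in Step 1: for every $|\xi|\geq 1$ one has $\mathrm{supp}(F_{s_1})\subseteq\overline{B(0,2)}$, with the bound independent of $\xi$. Since $a\in Lip(\mathbb{R}^n)$ is in particular continuous, $a(\overline{B(0,2)})$ is a compact subset of $\mathbb{R}^n$, and I would fix once and for all some $\psi\in C^\infty_c(\mathbb{R}^n)$ with $\psi\equiv 1$ on a neighborhood of $a(\overline{B(0,2)})$. Such a $\psi$ exists and lies in $H^{3/2}$ (indeed in every Sobolev space), which is exactly the regularity required to pair against the $H^{-3/2}_v$ bound on $h$ coming from Step 2.

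With this $\psi$, for every $|\xi|\geq 1$ and every $v\in\mathrm{supp}(F_{s_1})\subseteq\overline{B(0,2)}$ we have $\psi(a(v))=1$, so that $\int F_{s_1}(v)\,\psi(a(v))\,dv=\int F_{s_1}(v)\,dv$. Next I would use that $\Phi_{|\xi|^{-s_1}}$ is a mass-preserving mollifier, $\int\Phi_{|\xi|^{-s_1}}=\int\Phi$ after the scaling $y=v|\xi|^{s_1}$; Fubini then gives $\int F_{s_1}(v)\,dv=\bigl(\int\Phi\bigr)\int\tilde f(w)\phi(w)\,dw$ for each $|\xi|\geq 1$. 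Combining the two identities shows that on $|\xi|\geq 1$ the two integrands agree up to the fixed constant $\int\Phi$, from which the claimed inequality follows immediately. The finiteness of the right-hand side is precisely what was obtained just before the statement of the lemma: summing (\ref{bound_k}) over $k\in\mathbb{N}$ controls $\|h\|_{H^{-3/2}_v}$ weighted by $|\xi|^s$ in $L^2_{t,\xi}$, and the $H^{3/2}$--$H^{-3/2}$ duality bound $|\int h\psi\,dv'|\leq\|\psi\|_{H^{3/2}}\|h\|_{H^{-3/2}_v}$ makes $\int\int|\int h\,\psi\,dv'|^2|\xi|^s\,d\xi\,dt$ finite, this last quantity being equal to the right-hand side of the lemma by the definition of $h$.

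I do not expect a genuine obstacle here: the real content is the observation that the uniform-in-$\xi$ compact support of the family $\{F_{s_1}\}$ lets one fixed test function $\psi$ ``undo'' the change of variables $v\mapsto a(v)$ simultaneously for all frequencies. The only two points deserving care are the uniformity in $\xi$ of the support bound, which is exactly what permits $\psi$ to be chosen independently of $\xi$, and the normalization of $\Phi$; if $\int\Phi$ is not taken to equal $1$, its nonzero value is harmlessly absorbed into the implicit constant of the $\lesssim$.
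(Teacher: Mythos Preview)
Your proposal is correct and follows essentially the same idea as the paper: pick a fixed smooth cutoff so that $\psi(a(v))\equiv 1$ on the $v$-support of $F_{s_1}$, after which the pairing collapses to $(\int\Phi)\int\tilde f\phi\,dv$. The only cosmetic difference is that the paper introduces a second auxiliary cutoff $\psi_2\equiv 1$ on $B(0,2)$ and transfers the mollifier onto it via Plancherel, computing $(\Phi_{|\xi|^{-s_1}}\star_v\psi_2)(v)=\|\Phi\|_{L^1}$ on $\operatorname{supp}\phi$, whereas you reach the same conclusion in one step with the Fubini identity $\int(A\star B)=(\int A)(\int B)$; your route is slightly more direct.
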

This concludes our proof.

\begin{remark}
	Note that $m(\xi,\zeta)$ to be homogeneous zero in $\zeta$ is essential for the commutator to be positive-definite after interacting with the transport operator. In fact, if consider 
	$m(\xi,\zeta)=\frac{\xi}{|\xi|}\cdot \frac{\zeta}{(1+|\zeta|^2)^{\beta/2}}$ with $\beta>1,$ 
	$$
	\xi\cdot\nabla_\zeta m =\frac{|\xi|\left[ (1+|\zeta|^2)-\beta |\frac{\xi}{|\xi|}\cdot\zeta|^2 \right]}{(1+|\zeta|^2)^{\beta/2+1}}.
	$$
	When $\zeta$ is parallel to $\xi$ and $|\zeta|$ is large, it is negative and the argument doesn't work.
	
	The regularization recollects the regularization process in \cite{DiPerna2}. Here the convolution with $\Phi_{|\xi|^{-s_1}}$, along with the multiplier $m_0$, show explicitly the interaction between the regularity in $x$ and $v$.
\end{remark}
%
%%%%%%%%%%%%%%%%%%%%%%%%%%%%%%%%%%%%%%%%%%%%5
\subsubsection{Proof of Lemma \ref{lemma1}}
%%%%%%%%%%%%%%%%%%%%%%%%%%%%%%%%%%%%%%%%%%%%%%%%
%
Before estimating $A_k$, let us first show the relation of functions connected through change of variables.

\begin{proposition}\label{proposition1}
	Let $a\in Lip(\mathbb{R}^n).$ If $J_{a^{-1}}\in L^\gamma$, the change of variables is bounded from $L^p$ to $L^{(p'\gamma')'}$. Precisely, if $\int \ell(v')\psi(v')\, dv' = \int L(v)\psi(a(v)) \, dv, $ then
	$$ 
	\|\ell\|_{L^{(p'\gamma')'}_{v'}}\lesssim \|L\|_{L^p_v}.
	$$
\end{proposition}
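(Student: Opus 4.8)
The plan is to make the change of variables fully explicit and then reduce the whole estimate to a single application of Hölder's inequality, deferring the exponent bookkeeping to the very end. Since $a$ is an injective Lipschitz map, Rademacher's theorem together with the area formula lets me rewrite the defining relation $\int \ell(v')\psi(v')\,dv'=\int L(v)\,\psi(a(v))\,dv$ in the form $\int \ell(v')\psi(v')\,dv'=\int L(a^{-1}(v'))\,\psi(v')\,|J_{a^{-1}}(v')|\,dv'$ for every admissible test function $\psi$. Hence, pointwise almost everywhere,
\[
\ell(v')=L(a^{-1}(v'))\,|J_{a^{-1}}(v')|.
\]

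Next I would split the Jacobian factor so that one piece absorbs the entire $L^p$ mass of $L$ after the change of variables. Writing $|J_{a^{-1}}|=|J_{a^{-1}}|^{1/p}\,|J_{a^{-1}}|^{1/p'}$, the area formula gives
\[
\big\|\,L(a^{-1})\,|J_{a^{-1}}|^{1/p}\,\big\|_{L^p_{v'}}^p=\int |L(a^{-1}(v'))|^p\,|J_{a^{-1}}(v')|\,dv'=\int |L(v)|^p\,dv=\|L\|_{L^p_v}^p,
\]
while the remaining factor satisfies $\big\|\,|J_{a^{-1}}|^{1/p'}\,\big\|_{L^{p'\gamma}_{v'}}=\|J_{a^{-1}}\|_{L^\gamma}^{1/p'}$, which is a finite constant by (\ref{condition}). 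Applying Hölder's inequality with exponents $p$ and $p'\gamma$ to the product $\ell=\big[L(a^{-1})\,|J_{a^{-1}}|^{1/p}\big]\cdot |J_{a^{-1}}|^{1/p'}$ then yields
\[
\|\ell\|_{L^r_{v'}}\le \big\|\,L(a^{-1})\,|J_{a^{-1}}|^{1/p}\,\big\|_{L^p_{v'}}\,\big\|\,|J_{a^{-1}}|^{1/p'}\,\big\|_{L^{p'\gamma}_{v'}}=\|L\|_{L^p_v}\,\|J_{a^{-1}}\|_{L^\gamma}^{1/p'},
\]
where $r$ is determined by $\frac1r=\frac1p+\frac1{p'\gamma}$.

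It then remains only to check that $r=(p'\gamma')'$, which is a direct computation: $\frac1p+\frac1{p'\gamma}=\frac1p+\big(1-\frac1p\big)\frac1\gamma=\frac1p+\frac1\gamma-\frac1{p\gamma}$, whereas $\frac1{(p'\gamma')'}=1-\frac1{p'}\frac1{\gamma'}=1-\big(1-\frac1p\big)\big(1-\frac1\gamma\big)=\frac1p+\frac1\gamma-\frac1{p\gamma}$, so the two expressions coincide. This delivers the asserted bound $\|\ell\|_{L^{(p'\gamma')'}_{v'}}\lesssim \|L\|_{L^p_v}$, the implied constant being $\|J_{a^{-1}}\|_{L^\gamma}^{1/p'}$.

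The Hölder step is routine; the genuine obstacle lies in the first paragraph, namely justifying the pointwise identity $\ell=(L\circ a^{-1})\,|J_{a^{-1}}|$ when $a$ is merely Lipschitz and one-to-one rather than a smooth diffeomorphism. Here one must invoke the area formula for Lipschitz maps to legitimize the change of variables, and use that under (\ref{condition}) the inverse $a^{-1}$ is differentiable almost everywhere with Jacobian $J_{a^{-1}}\in L^\gamma$, so that the push-forward of $L\,dv$ by $a$ is absolutely continuous with precisely the stated density. A little additional care is also needed at the degenerate endpoints — for instance $\gamma=\infty$, which occurs when $a(v)=v$ and forces $r=p$ — where the Hölder exponents must be read in the limiting sense.
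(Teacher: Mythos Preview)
Your argument is correct and arrives at the same bound with the same implicit constant, but it proceeds along a different route than the paper. The paper argues by duality: it first bounds the pull-back $\psi\mapsto\psi\circ a$ from $L^{p'\gamma'}_{v'}$ into $L^{p'}_v$ by applying H\"older to $\int |\psi(a(v))|^{p'}\,dv=\int |\psi(v')|^{p'}J_{a^{-1}}(v')\,dv'$, and then reads off the estimate on $\ell$ from $\|\ell\|_{L^{(p'\gamma')'}}=\sup_{\|\psi\|_{L^{p'\gamma'}}=1}|\int L(v)\psi(a(v))\,dv|$. You instead make $\ell$ explicit as $(L\circ a^{-1})\,|J_{a^{-1}}|$, split the Jacobian as $|J_{a^{-1}}|^{1/p}\cdot|J_{a^{-1}}|^{1/p'}$, and apply H\"older directly to that product. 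The duality proof has the minor advantage that it never needs the pointwise identity for $\ell$---it works straight from the weak definition---which is exactly the regularity issue you flag in your last paragraph; your direct approach, on the other hand, makes the mechanism and the constant $\|J_{a^{-1}}\|_{L^\gamma}^{1/p'}$ completely transparent. Both are short and rest on a single H\"older inequality together with the change of variables, so the difference is one of presentation rather than substance.
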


\begin{proof}
	By H\"{o}lder's inequality,
	$$
	\int |\psi(a(v))|^{p'} \, dv =\int |\psi(v')|^{p'} J_{a^{-1}}(v') \, dv' \leq \|J_{a^{-1}}\|_{L^\gamma}\left(\int |\psi(v')|^{p'\gamma'}\, dv'\right)^{1/\gamma'}.
	$$
	So
	\begin{align*}
	\|\ell\|_{L^{(p'\gamma')'}_{v'}}
	=\sup_{\|\psi\|_{L^{p'\gamma'}_{v'}}=1}\left|\int \ell \psi \right|
	&=\sup_{\|\psi\|_{L^{p'\gamma'}_{v'}}=1}\left|\int L(v)\psi(a(v)) \right|\\
	&\leq \sup_{\|\psi\|_{L^{p'\gamma'}_{v'}}=1} \|L\|_{L^p_v}\|\psi(a(v))\|_{L^{p'}_v}\lesssim \|L\|_{L^p_v}.
	\end{align*}
\end{proof}

\begin{remark}
	If $a$ is one-to-one and $a\in Lip(\mathbb{R}^n,\mathbb{R}^m)$, where $n< m,$ the area formula gives
	\[
	\int |\psi(a(v))|^{p'} \, dv =\int |\psi(v')|^{p'} |J_{a^{-1}}(v')| \, d\mathcal{H}^n(v'),
	\]
	with $|J_{a^{-1}}|=(det(Da^{-1})(Da^{-1})^T)^{1/2}$, and $\mathcal{H}^n$ is a Hausdorff measure of dimension $n$. This relation would put $\ell$ in Hausdorff measurable spaces, which are not compatible with our arguments with Fourier analysis in the whole space.
\end{remark}

We now use Proposition \ref{proposition1} to estimate $A_k$ term by term when $d_i>0$ for all $i=1,2,3,4$. The other cases follow similar calculations.
\begin{itemize}
	\item 
	For the first term: By the Cauchy-Schwarz inequality, and that $R\cdot\nabla_v G^n_1$ is Calderon-Zygmund operator:
	\begin{equation}\label{first_term}
	\begin{split}
	&  \int\int \bar{h}\left(\frac{\xi}{|\xi|}\cdot\nabla_{v'} G_1^n \star_{v'} h\right) \, dv' \, \chi(2^{-k} \xi ) d\xi |_{t=0}^{t=T}\\
	&\qquad\qquad\leq \|\mathcal{F}^{-1}_x(h \chi(2^{-k} \xi))\|^2_{L^2_{xv'}}  |_{t=0}^{t=T},
	\end{split}
	\end{equation}

	Denote $\mathcal{F}^{-1}_x(\chi)$ by $S$. By Proposition \ref{proposition1}, for each fixed $t,$
	\begin{align*}
	\|\mathcal{F}^{-1}_x(h \chi(2^{-k} \xi))\|_{L^2_{xv'}}
	&\lesssim   \|S_{2^{-k }}\star_x f_k \star_v \Phi_{2^{-k s_1}}\|_{L^2_x L^{\frac{2(\gamma-1)}{(\gamma-2)}}_{v}}\\
	&\lesssim 2^{kn\left(\frac{1}{p_1}-\frac{1}{2}\right) +ks_1\left(\frac{1}{p_2}-\frac{\gamma-2}{2\gamma-2}\right) }\|f_k\|_{L^{p_1}_{x}L^{p_2}_{v}}.
	\end{align*}
	
	Plug this back into (\ref{first_term}) and we have
	\begin{equation*}\begin{split}
	&\int\int \bar{h}\left(\frac{\xi}{|\xi|}\cdot\nabla_{v'} G_1^n \star_{v'} h\right) \, dv' \, \chi(2^{-k} \xi ) d\xi |_{t=0}^{t=T}\\
	&\qquad\qquad\lesssim 2^{kn\left(\frac{2}{p_1}-1\right) +ks_1\left(\frac{2}{p_2}-\frac{\gamma-2}{\gamma-1}\right) }\|f_k\|^2_{L^{p_1}_{x}L^{p_2}_{v}}|_{t=0}^{t=T}.
	\end{split}
	\end{equation*}
	
	\item For the second term:
	\begin{align*}
	&\int \bar{h}\left(\frac{\xi}{|\xi|}\cdot \nabla_{v'} G^n_1 \star_{v'} k^1 \right) \, dv'\,\chi(2^{-k} \xi) d\xi \, dt\\
	& \lesssim \int \|S_{2^{-k }}\star_x f_k \star_v \Phi_{2^{-k s_1}}\|_{L^2_x L^{\frac{2(\gamma-1)}{(\gamma-2)}}_{v}}\\
	&\qquad\|2^{k\alpha s_1} S_{2^{-k}} \star_x g_k \star_v ((-\Delta_v)^{\beta}\Phi)_{2^{-ks_1}}\|_{L^2_x L^{\frac{2(\gamma-1)}{(\gamma-2)}}_{v}} \, dt,
	\end{align*}
	which is of the order of
	\[
	2^{kn\left(\frac{1}{p_1}+\frac{1}{q_1}-1\right)+ks_1 n\left(\frac{1}{p_2}+\frac{1}{q_2}-\frac{\gamma-2}{\gamma-1}\right) +k\alpha s_1} \int \|f_k\|_{L^{p_1}_{x}L^{p_2}_{v}}\|g_k\|_{L^{q_1}_{x}L^{q_2}_{v}} \, dt.
	\]

	\item For the last term:
	\begin{align*}
	&\int \bar{h}\left(\frac{\xi}{|\xi|}\cdot \nabla_{v'} G^n_1 \star_{v'} k^2  \right) \, dv'\,\chi(2^{-k} \xi) d\xi \, dt\\
	&\lesssim \int \|S_{2^{-k }}\star_x f_k \star_v \Phi_{2^{-k s_1}}\|_{L^2_x L^{\frac{2(\gamma-1)}{(\gamma-2)}}_{v}}\\
	&\qquad\qquad\| S_{2^{-k}} \star_x Com^1 \star_v \Phi_{2^{-ks_1}}\|_{L^2_x L^{\frac{2(\gamma-1)}{(\gamma-2)}}_{v}} \, dt.
	\end{align*}
	
	Because $\Phi$ is compactly supported, $\Phi_{2^{-ks_1}}(v-w)$ forces $|v-w|\lesssim 2^{-ks_1}$. Moreover since $a$ is Lipschitz, $|a(v)-a(w)|\lesssim 2^{-ks_1}.$
	\begin{align*}
	&\| S_{2^{-k}} \star_x Com^1 \star_v \Phi_{2^{-ks_1}}\|_{L^2_x L^{\frac{2(\gamma-1)}{(\gamma-2)}}_{v}}\\
	&=\Bigg\|\int 2^{k}(a(v)-a(w))\cdot  (f\star_x (\nabla_x S)_{2^{-k}})(w)\phi(w)\\
	&\qquad\qquad\Phi_{2^{-ks_1}} (v-w) \, dw \Bigg\|_{L^2_{x}L^{\frac{2(\gamma-1)}{(\gamma-2)}}_v}\\
	&\lesssim 2^{k-ks_1} \|   f\star_x (\nabla_x S)_{2^{-k}}|\star_v \Phi_{2^{-ks_1}}   \|_{L^2_{x}L^{\frac{2(\gamma-1)}{(\gamma-2)}}_v}\\
	&\lesssim 2^{kns_1\left(\frac{1}{p_2}-\frac{(\gamma-2)}{2(\gamma-1)}\right)+kn\left(\frac{1}{p_1}-\frac{1}{2}\right)+k(1-s_1)}\|f\|_{L^{p_1}_{x}L^{p_2}_{v}}.
	\end{align*}
	
	Hence
	\begin{align*}
	&\int \bar{h}\left(\frac{\xi}{|\xi|}\cdot \nabla_{v'} G^n_1 \star_{v'} k^3  \right) \, dv'\,\chi(2^{-k} \xi) d\xi \, dt \\
	&\lesssim 2^{kn\left(\frac{2}{p_1}-1\right)+ks_1 n\left(\frac{2}{p_2}-\frac{\gamma-2}{\gamma-1}\right)+k(1-s_1)}\int \|f\|_{L^{p_1}_{x}L^{p_2}_{v}}^2\, dt.
	\end{align*}

\end{itemize}

Combining all estimates,
\begin{align*}
|A_k|&\lesssim 2^{kn\left(\frac{2}{p_1}-1\right) +ks_1 n  \left(\frac{2}{p_2}-\frac{\gamma-2}{\gamma-1}\right) }\|f_k\|^2_{ L^{p_1}_{x}L^{p_2}_{v}}|_{t=0}^{t=T}\\
&+2^{kn\left(\frac{1}{p_1}+\frac{1}{q_1}-1\right)+ks_1 n\left(\frac{1}{p_2}+\frac{1}{q_2}-\frac{\gamma-2}{\gamma-1}\right) +k\alpha s_1} \int \|f_k\|_{L^{p_1}_{x}L^{p_2}_{v}}\|g_k\|_{L^{q_1}_{x}L^{q_2}_{v}} \, dt\\
&+2^{kn\left(\frac{2}{p_1}-1\right)+ks_1 n\left(\frac{2}{p_2}-\frac{\gamma-2}{\gamma-1}\right)+k(1-s_1)}\int \|f_k\|_{L^{p_1}_{x}L^{p_2}_{v}}^2\, dt.
\end{align*}
%%%%%%%%%%%%%%%%%%%%%%%%%%%%%%%%%%%%%%%%
\subsubsection{Proof of Lemma \ref{lemma2}}
%%%%%%%%%%%%%%%%%%%%%%%%%%%%%%%%%%%%%%%%
Choose two smooth functions $\psi_1$ and $\psi_2$ such that $\psi_1(a(v))\equiv 1$ on $v\in B(0,1),$ and $\psi_2(v)\equiv 1$ on $v\in B(0,2).$ We put $\psi_1$ in the place of $\psi$ and plug in $\psi_2$ as an auxiliary function at no cost since it's $1$ on the support of $\phi$. Then
\begin{align*}
\infty &> \int \int_{|\xi|\geq 1} \left|\int  F_{s_1} (v) \psi_2(v) \, dv \right|^2 |\xi|^{s} \, d\xi \, dt\\
&=\int \int_{|\xi|\geq 1} \left|\int  \mathcal{F}_\zeta(\tilde{f}\phi)(\zeta)\mathcal{F}_{\zeta}(\Phi)(\zeta |\xi|^{-s_1}) \mathcal{F}_{\zeta}(\psi_2)(\zeta) \, d\zeta \right|^2 |\xi|^{s} \, d\xi \, dt\\
&= \int \int_{|\xi|\geq 1} \left|\int  (\tilde{f} \phi)(\Phi_{|\xi|^{-s_1}} \star_v \psi_2) \, dv \right|^2 |\xi|^{s} \, d\xi \, dt.
\end{align*}

Because $\psi_2\equiv 1$ on $B(0,2)$ and $|v-w|\leq |v|+|w|\leq 1+|\xi|^{-s_1}\leq 2$ when $|\xi|\geq 1,$
\begin{align*}
(\Phi_{|\xi|^{-s_1}} \star_v \psi_2 )(v)&=\int |\xi|^{ns_1}\Phi(w|\xi|^{s_1})\psi_2(v-w) \, dw\\
&=\int |\xi|^{ns_1}\Phi(w|\xi|^{s_1}) \, dw= \|\Phi\|_{L^1_v}  \,\,\,\,\,\,\text{for all $|v|\leq 1.$}
\end{align*}

So finally we reach
$$
\int_0^T \int_{|\xi|\geq 1} \left|\int  \tilde{f}\phi \, dv \right|^2 |\xi|^{s} \, d\xi\, dt<\infty
$$
for all $s<S.$
%
%%%%%%%%%%%%%%%%%%%%%%%%%%%%%%%%%%%%%%%
\subsection{Proof of Theorem \ref{theorem6}}
%%%%%%%%%%%%%%%%%%%%%%%%%%%%%%%%%%%%%%%%%
%
This proof is essentially the same as Theorem \ref{theorem2}, but with a different change of variable. After Step 1, instead of $v \mapsto v'=a(v)$ , we make $v \mapsto \lambda=a(v)\cdot \frac{\xi}{|\xi|}$ for each fixed $\xi$. For convenience, let us denote $\epsilon=|\xi|^{-s_1}$. So parallel to (\ref{E:h}), we have
\begin{equation} 
\partial_t h_\epsilon +i \lambda |\xi|  h_\epsilon = k^1_\epsilon +k^2_\epsilon 
\end{equation}
in the sense of distribution, where $h_\epsilon, k^1_\epsilon,k^2_\epsilon, k^3_\epsilon$ are defined as following:
$$
\int F_\epsilon (v) \psi\left(a(v)\cdot\frac{\xi}{|\xi|}\right) \, dv =\int h^\xi_\epsilon (\lambda) \psi(\lambda)\, d\lambda.
$$
$$
\int k^1_\epsilon(\lambda)\psi(\lambda)\, d\lambda=\int \left[((-\Delta_v)^{\alpha/2}\tilde{g}\phi)\star_v \Phi_\epsilon \right](v)\psi\left(a(v)\cdot \frac{\xi}{|\xi|}\right)\, dv,
$$
and
$$
\int k^2_\epsilon(\lambda)\psi(\lambda)\, d\lambda=\int Com^1(v) \psi\left(a(v)\cdot\frac{\xi}{|\xi|}\right) \, dv.
$$
The subscript $\epsilon$ is to emphasize the dependence on $\xi$.

Thanks to the non-degeneracy condition with $\nu=1,$ this change of variables preserves $L^p$ norm:
\begin{proposition}\label{proposition2}
	Let $a$ be Lipschitz and satisfy (\ref{nondeg}) with $\nu=1.$ Let $\psi:\mathbb{R}\rightarrow \mathbb{R}.$ Then for all $\sigma \in \mathbb{S}^{m},$ $1\leq p\leq \infty,$
	$$
	\|\psi(a(v)\cdot \sigma)\|_{L^p_v}\leq c_0\|\psi\|_{L^{p}_\lambda}.
	$$
	And hence if $\int L(v) \psi \left(a(v)\cdot \sigma\right)\, dv=\int \ell^\sigma(\lambda)\psi(\lambda)\, d\lambda,$
	then $$
	\|{\ell^\sigma}\|_{L^p_{\lambda}}\lesssim \|L\|_{L^p_v}.
	$$ 
\end{proposition}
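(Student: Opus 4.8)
The plan is to recognize that the non-degeneracy condition (\ref{nondeg}) with $\nu=1$ says exactly that the pushforward of Lebesgue measure under the scalar map $\Lambda_\sigma(v):=a(v)\cdot\sigma$ admits a bounded density, and then to close with the same duality argument already used for Proposition~\ref{proposition1}.

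First I would fix the direction $\sigma$ and introduce the pushforward measure $\mu_\sigma:=(\Lambda_\sigma)_\#\mathcal{L}^n$ on $\mathbb{R}_\lambda$, defined by $\mu_\sigma(E)=\mathcal{L}^n(\Lambda_\sigma^{-1}(E))$. For an interval $I=[\tau-\alpha/2,\tau+\alpha/2]$ of length $\alpha$ one has $\Lambda_\sigma^{-1}(I)=\{v:|a(v)\cdot\sigma-\tau|\leq\alpha/2\}$, so letting the compact set $D$ in (\ref{nondeg}) exhaust $\mathbb{R}^n$ (legitimate since the constant $c_0$ there is uniform in $D$) and using continuity of measure from below yields $\mu_\sigma(I)\leq c_0\alpha=c_0|I|$. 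A Borel measure on $\mathbb{R}$ whose value on every interval $I$ is at most $c_0|I|$ is absolutely continuous with respect to $\mathcal{L}^1$, with Radon--Nikodym density $\rho_\sigma$ obeying $0\leq\rho_\sigma\leq c_0$ a.e.; this is a standard Lebesgue-differentiation/covering fact, and it is precisely the step where $\nu=1$ is essential, since a bound $\mu_\sigma(I)\leq c_0|I|^\nu$ with $\nu<1$ would not force absolute continuity (compare the Appendix counterexample). This deduction is the only genuinely nontrivial ingredient of the whole argument.

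Granting the density bound, the first inequality follows from the change-of-variables formula for pushforwards: for $1\leq p<\infty$,
\[
\int_{\mathbb{R}^n}|\psi(a(v)\cdot\sigma)|^p\,dv=\int_{\mathbb{R}}|\psi(\lambda)|^p\,d\mu_\sigma(\lambda)=\int_{\mathbb{R}}|\psi(\lambda)|^p\rho_\sigma(\lambda)\,d\lambda\leq c_0\int_{\mathbb{R}}|\psi(\lambda)|^p\,d\lambda,
\]
which gives $\|\psi(a(\cdot)\cdot\sigma)\|_{L^p_v}\leq c_0^{1/p}\|\psi\|_{L^p_\lambda}$, to be absorbed into the constant $c_0$; the endpoint $p=\infty$ is trivial since composition cannot raise the essential supremum.

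For the second inequality I would argue by duality exactly as in Proposition~\ref{proposition1}: starting from the defining identity,
\[
\|\ell^\sigma\|_{L^p_\lambda}=\sup_{\|\psi\|_{L^{p'}_\lambda}=1}\left|\int\ell^\sigma\psi\,d\lambda\right|=\sup_{\|\psi\|_{L^{p'}_\lambda}=1}\left|\int L(v)\psi(a(v)\cdot\sigma)\,dv\right|\leq\|L\|_{L^p_v}\sup_{\|\psi\|_{L^{p'}_\lambda}=1}\|\psi(a(\cdot)\cdot\sigma)\|_{L^{p'}_v},
\]
and the first inequality controls the remaining supremum by $c_0$, giving $\|\ell^\sigma\|_{L^p_\lambda}\lesssim\|L\|_{L^p_v}$. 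Everything beyond the absolute-continuity deduction is thus the pushforward change of variables together with the Hölder duality already carried out for Proposition~\ref{proposition1}.
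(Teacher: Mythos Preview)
Your proof is correct and follows essentially the same route as the paper: both extend the interval bound from (\ref{nondeg}) with $\nu=1$ to arbitrary measurable sets, deduce the $L^p$ bound on $\psi\circ\Lambda_\sigma$, and then close with the same H\"older duality as in Proposition~\ref{proposition1}. The only cosmetic difference is that you phrase the middle step via a bounded Radon--Nikodym density for the pushforward and the change-of-variables formula, whereas the paper compares distribution functions and applies the layer-cake representation; these are equivalent formulations of the same measure-theoretic fact.
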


Consider
$$
\int \bar{h^\xi_\epsilon}(\lambda)\frac{1}{i}(\partial_\lambda G^1_1)(\lambda-\alpha)h^\xi_\epsilon(\alpha)\, d\alpha  \, d \lambda.
$$
Then similar estimations and procedures lead to
$$
\int |\xi|^{1/(\alpha+1)} \bar{h_\epsilon^\xi}(\lambda)G^1_3(\lambda-\alpha)h^\xi_\epsilon (\alpha)\, d\alpha\, d\lambda \, dt \, d\xi<\infty.
$$
One can conclude the result from here by following Step 3 in the proof of Theorem \ref{theorem2}.

Notice here everything is in one dimension for each fixed $\xi.$ And because of the $L^2$ setting, it is valid to do calculation in the level of $(v,\xi).$

The last thing to check is Proposition \ref{proposition2}.

\vspace{5pt}

\noindent \textbf{Proof of Proposition \ref{proposition2}.} When $p=\infty$, the result is straightforward. For $1\leq p<\infty.$ 
(\ref{nondeg}) implies for any interval $I,$ we have 
$$
m(\left\{v\in B(0,1): a(v)\cdot \sigma \in I \right\})\leq c_0 m(I).
$$

By a standard approximation from intervals to general measurable sets, one has for any measurable set $A$,
$$
m(\left\{v\in B(0,1): a(v)\cdot \sigma \in A \right\})
\leq c_0 m(A).
$$

From this we see the relation between the distribution functions of $\psi(a(v)\cdot \sigma)$ and $\psi$: 
\begin{align*}
d_{\psi(a(v)\cdot \sigma)}(s) &=m(  \left\{v\in B: a(v)\cdot \sigma \in \left\{\lambda : |\psi(\lambda)|>s \right\}  \right\}  )\\
&\leq c_0 m(  \left\{\lambda : |\psi(\lambda)|>s \right\}  ) =c_0  d_{\psi}(s).
\end{align*}

Therefore
\begin{equation}\label{distribution}
\begin{split}
\|\psi(a(v)\cdot \sigma)\|_{L^p_v} 
&=p^{1/p}\left(\int^\infty_0 \left[ d_{\psi(a(v)\cdot \sigma)}(s)^{1/p} s  \right]^p \frac{ds}{s} \right)^{1/p}\\
&\leq p^{1/p}\left(\int^\infty_0 \left[ c_0^{1/p} d_{\psi}(s)^{1 /p} s  \right]^p \frac{ds}{s} \right)^{1/p}= c_0   \|\psi\|_{L^{p}_\lambda}.
\end{split}
\end{equation}
And by duality,
\begin{align*}
\|\ell^\sigma\|_{L^{p}_{\lambda}}
=\sup_{\|\psi\|_{L^{p'}_{\lambda}}=1}\left|\int \ell^\sigma \psi \right|
&=\sup_{\|\psi\|_{L^{p'}_{\lambda}}=1}\left|\int L(v)\psi\left(a(v)\cdot\sigma\right) \right|\\
&\leq \sup_{\|\psi\|_{L^{p'}_{\lambda}}=1} \|L\|_{L^p_v}\left|\left|\psi\left(a(v)\cdot\sigma\right)\right|\right|_{L^{p'}_v}\\
& \leq c_0 \sup_{\|\psi\|_{L^{p'}_{\lambda}}=1} \|L\|_{L^p_v}\|\psi\|_{L^{p'}_\lambda}=c_0 \|L\|_{L^p_v}, 
\end{align*}
where the first inequality is due to the H\"{o}lder's inequality, and second by (\ref{distribution}). This concludes our proof for Proposition \ref{proposition2} and hence Theorem \ref{theorem6}.

\section*{Acknowledgement}
The authors would like to thank M. Machedon for sharing his insight on our method.

$$
\,
$$
$$
\,
$$
$$
\,
$$

\section*{Appendix: Example for the non-degeneracy condition}
% the \\ insures the section title is centered below the phrase: AppendixA

We say $a(v)\in Lip(\mathbb{R})$ satisfies (\ref{nondeg}) with $\nu\in(0,1]$ on intervals if
\begin{equation}\label{I}
|\left\{v: a(v) \in I \right\}|\leq C|I|^\nu, \,\,\,\,\,\,\text{for all intervals $I,$}
\end{equation}
And $a(v)$ satisfies the non-degeneracy condition on open sets with $\nu\in (0,1]$:
\begin{equation}\label{O}
|\left\{v: a(v) \in \mathcal{O} \right\}|\leq C|\mathcal{O}|^\nu, \,\,\,\,\,\,\text{for all open set $\mathcal{O}.$}
\end{equation}

Here we give an example to show (\ref{I}) cannot imply (\ref{O}) with the same $\nu$ when $\nu=1/2.$ In fact the construction can be adapted to produce examples for all $\nu<1.$ Notice (\ref{I}) and (\ref{O}) are equivalent when $\nu=1$.

Define $a:\left[0,\sum_{i=0}^\infty \frac{1}{3^i}\right] \rightarrow \left[0,\sum_{i=0}^\infty \frac{1}{3^{2i}}\right]\subset \mathbb{R} $ as follows:

\[
\text{on $[0,1]=D_1,$}  \,\,\,\,\,\,  a(v)=a_1(v)=1-(1-v)^2, 
\] 
\[
\text{on $\left[1,1+\frac{1}{3}\right]=D_2,$} \,\,\,\,\,\,  a(v)= a_2(v)=1+\frac{1}{3^2}a_1((v-1)3)
\] 
\[
\vdots
\]

\begin{figure}[h]
\begin{center}
\scalebox{0.9}{\includegraphics{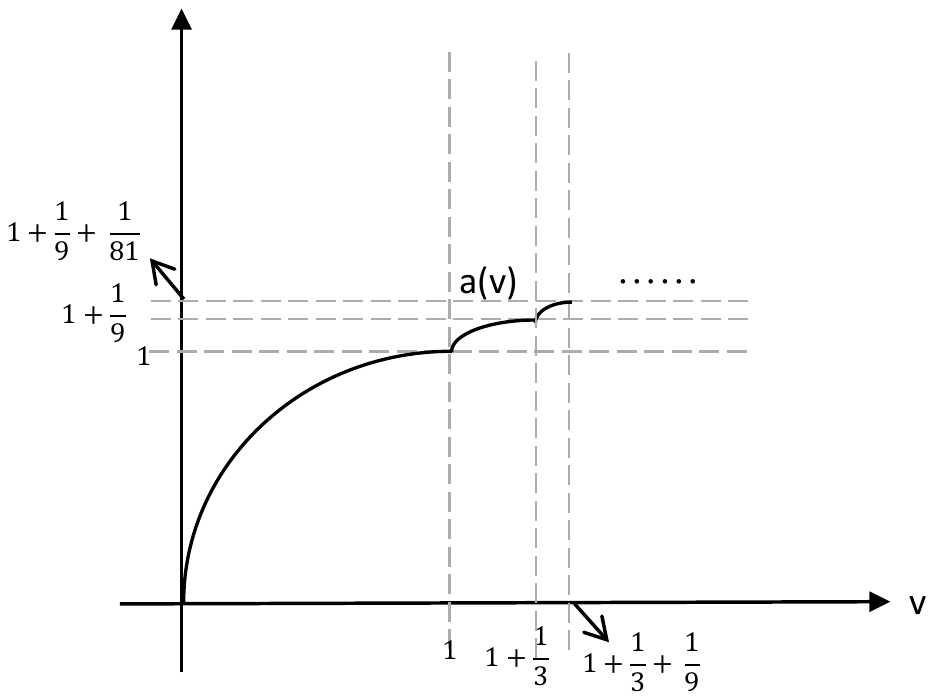}} \caption{graph of $a(v)$}
\end{center}
\end{figure}

The general formula is
\[
a(v)=a_n(v)=\sum_{i=0}^{n-2} \frac{1}{3^{2i}}+\frac{1}{3^{2(n-1)}} a_1\left(\left(v-\sum_{i=0}^{n-2}\frac{1}{3^i}\right)3^{n-1}\right)\,\,\,\,\,\,\text{on $\left[\sum_{i=0}^{n-2}\frac{1}{3^i},\sum_{i=0}^{n-1}\frac{1}{3^i}\right]=D_n.$}
\] 

We shall prove that $a$ satisfies condition (\ref{I}) with $\nu=1/2$, but it fails (\ref{O}) with the same $\nu$.
\begin{proposition}
There exists $C>0$ such that for any interval $I$,
\begin{equation}
|a^{-1}(I)|=\left|\left\{v: a(v) \in I \right\}\right| \leq C|I|^{1/2}. 
\end{equation}
\end{proposition}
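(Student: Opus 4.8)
The plan is to analyze the self-similar structure of $a$ directly, bounding $|a^{-1}(I)|$ by comparing the interval $I$ against the natural dyadic-like (actually triadic) scales at which the function $a$ is built. The function $a$ maps the intervals $D_n$ of length $3^{-(n-1)}$ onto image intervals $a(D_n)$ of length $3^{-2(n-1)}$, and on each $D_n$ it is a rescaled copy of $a_1(v)=1-(1-v)^2$, whose inverse has the square-root behavior $a_1^{-1}(s)=1-\sqrt{1-s}$. The key quantitative fact I would extract first is the local modulus of continuity of $a^{-1}$: on the image block $a(D_n)$, the preimage of a subinterval of length $\delta$ has measure $\lesssim 3^{-(n-1)}\sqrt{3^{2(n-1)}\delta}=\sqrt{\delta}$. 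So \emph{within a single block} the estimate $|a^{-1}(I)|\lesssim|I|^{1/2}$ holds with a constant independent of $n$ — this is the heart of the matter and follows from the square-root singularity of $a_1$ at its right endpoint $v=1$.

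\textbf{First I would} dispose of the single-block case: for any interval $I$ contained in one $a(D_n)$, use the explicit scaling $a_n(v)=\sum_{i=0}^{n-2}3^{-2i}+3^{-2(n-1)}a_1\big((v-\sum_{i=0}^{n-2}3^{-i})3^{n-1}\big)$ to reduce to $a_1$, then bound $|a_1^{-1}(J)|\leq C|J|^{1/2}$ for $J\subseteq[0,1]$ using $a_1^{-1}(s)=1-\sqrt{1-s}$, whose derivative $\frac{1}{2\sqrt{1-s}}$ is integrable and yields exactly the $1/2$-Hölder bound. \textbf{Next}, the main case: an interval $I$ straddling several consecutive blocks $a(D_{n}),\dots,a(D_{n+k})$. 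Here I would split $I=\bigcup_j (I\cap a(D_{n+j}))$ and sum the single-block bounds. Crucially, the blocks $a(D_{n+j})$ are \emph{disjoint and geometrically decreasing}, with the left endpoints converging to the limit point $L=\sum_{i\geq 0}3^{-2i}$. Because $|a(D_{n+j})|=3^{-2(n+j-1)}$ decays geometrically, the tail sum $\sum_j |a(D_{n+j})|^{1/2}=\sum_j 3^{-(n+j-1)}$ is a convergent geometric series controlled by its largest term $3^{-(n-1)}=|a(D_n)|^{1/2}$, which in turn is $\lesssim|I|^{1/2}$ once $I$ reaches into block $n$. This geometric summability is what makes $\nu=1/2$ survive the passage from a single interval to one meeting infinitely many blocks.

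\textbf{The hard part will be} handling the interval that contains the accumulation point $L$ and extends over a full geometric tail of blocks, since there $a^{-1}(I)$ is genuinely a countable union of preimages and one must check that $\sum_j\sqrt{|I\cap a(D_{n+j})|}$ is dominated by $\sqrt{|I|}$ rather than merely by $\sum_j\sqrt{|a(D_{n+j})|}$. The clean way is a two-regime argument: for the (at most two) boundary blocks where $I$ only partially overlaps, use the single-block $1/2$-bound on the actual overlap length; for the interior blocks, which $I$ covers entirely, use $|a^{-1}(a(D_{n+j}))|=|D_{n+j}|=3^{-(n+j-1)}=|a(D_{n+j})|^{1/2}$ exactly, and sum the geometric series. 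Since $I$ covering block $n+j$ entirely forces $|I|\geq|a(D_{n+j})|=3^{-2(n+j-1)}$, and the dominant term in the covered-block sum is the largest block $a(D_{n})$ with $|a(D_n)|^{1/2}=3^{-(n-1)}\leq\sqrt{|I|}$ up to the geometric-series constant $\frac{1}{1-1/3}$, everything collapses to $C|I|^{1/2}$ with $C$ absorbing this universal geometric factor. \textbf{I would therefore conclude} by assembling the boundary contribution (two single-block square-root bounds) and the interior contribution (a geometric sum dominated by its top term), obtaining $|a^{-1}(I)|\leq C|I|^{1/2}$ with $C$ depending only on the fixed ratio $3$ and the constant for $a_1$.
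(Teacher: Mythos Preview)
Your proposal is correct and follows essentially the same approach as the paper: both arguments first handle the single-block case via the explicit square-root inverse of $a_1$, then decompose a general interval into two boundary pieces (partial overlap with the first and last blocks) plus a middle piece consisting of fully covered blocks, and control the middle piece by the geometric summability of $|D_{n+j}|=3^{-(n+j-1)}=|a(D_{n+j})|^{1/2}$. The paper carries out the geometric sum for the middle piece by explicit computation (arriving at $|a^{-1}(I_2)|^2\leq 2|I_2|$), whereas you phrase the same step as ``geometric series dominated by its largest term $\lesssim|I|^{1/2}$''; the content is identical.
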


\begin{proof}

Consider an interval $I=\left[\sum_{i=0}^{n-1}\frac{1}{3^{2i}}-p_2,\sum_{i=0}^{n-1}\frac{1}{3^{2i}}-p_1\right]=[c,d]$ inside some $a(D_n)$, where $0\leq p_1< p_2\leq \frac{1}{3^{n-1}}.$ So $|I|=p_2-p_1.$
Denote the pre-image of $c$ and $d$ by $v_2$ and $v_1$ respectively. Then we have for each $k=1,2$,
$$
a_n(v_k)=\sum_{i=0}^{n-2} \frac{1}{3^{2i}}+\frac{1}{3^{2(n-1)}} a_1\left((v_k-\sum_{i=0}^{n-2}\frac{1}{3^i})3^{n-1}\right)=\sum_{i=0}^{n-1}\frac{1}{3^{2i}}-p_k.
$$
So
$$
a_n^{-1}\left(\sum_{i=0}^{n-1}\frac{1}{3^{2i}}-p_1\right)=v_k=\sum_{i=0}^{n-1}\frac{1}{3^i}-\sqrt{p_k}.
$$

We therefore have 
$$
|a^{-1}(I)|=\sqrt{p_2}-\sqrt{p_1}\leq \sqrt{p_2-p_1}=|I|^{1/2}.
$$
If $I=[c,d]\subset a(\cup_{i=m_1}^{m_2} D_i)$, separate $I$ into three sub-intervals: $I=I_1\cup I_2 \cup I_3,$ where $I_1=\left[c,\sum_{i=0}^{m_1-1}\frac{1}{3^{2i}}\right]$, $I_2=\left[\sum_{i=0}^{m_1-1}\frac{1}{3^{2i}},\sum_{i=0}^{m_2-2}\frac{1}{3^{2i}}\right]$ and $I_3=\left[\sum_{i=0}^{m_2-2}\frac{1}{3^{2i}},d\right].$ The above case applies to $I_1$ and $I_3$, so $|a^{-1}(I_1)|\leq |I_1|^{1/2}$ and $|a^{-1}(I_3)|\leq |I_3|^{1/2}.$

For $I_2,$ we have 
$$
|I_2|=\sum_{m_1}^{m_2-2}\frac{1}{3^{2i}}=\frac{9}{8}\frac{1}{3^{2m_1}}\left[1-\left(\frac{1}{9}\right)^{m_2-m_1-1}\right].
$$

And
\begin{align*}
|a^{-1}(I_2)|^2&=\left(\sum_{m_1}^{m_2-2}\frac{1}{3^i}\right)^2=\frac{9}{4}\frac{1}{3^{2m_1}}\left[1-\left(\frac{1}{3}\right)^{m_2-m_1-1}\right]^2\\
&\leq \frac{9}{4}\frac{1}{3^{2m_1}}\left[1-2\left(\frac{1}{9}\right)^{m_2-m_1-1}+\left(\frac{1}{9}\right)^{m_2-m_1-1}\right]\\
&=2|I_2|.
\end{align*}
So 
$$
|a^{-1}(I_2)|\leq 2^{1/2}|I_2|^{1/2}.
$$

Notice that this inequality is still true when $m_2$ goes to infinity, so there are no issues near the right end point.

Combining the three inequalities we get
$$
|a^{-1}(I)|=\sum_{i=1}^3| a^{-1}(I_i)|\leq  2^{1/2}\sum_{i=1}^3|I_i|^{1/2} \leq 6^{1/2} \left(\sum_{i=1}^3|I_i|\right)^{1/2}=6^{1/2} |I|^{1/2}.
$$
\end{proof}

\begin{proposition}
There exists a sequence of set $\mathcal{O}^m$ such that
$$
\frac{|a^{-1}(\mathcal{O}^m)|}{|\mathcal{O}^m|^{1/2}}\rightarrow \infty \,\,\,\,\,\,\,\,\text{as $m\rightarrow \infty.$}
$$

\end{proposition}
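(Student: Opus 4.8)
The plan is to exploit the self-similar structure of $a$, whose degeneracy is concentrated precisely at the right endpoints $v_n^\ast:=\sum_{i=0}^{n-1}\frac{1}{3^i}$ of the pieces $D_n$, where $a_n'$ vanishes (inherited from the flat point $a_1'(1)=0$). Near each such point $a$ obeys a square-root law: the computation underlying the previous proposition shows that an interval of length $p$ sitting at the top of $a(D_n)$, namely $[\,\sum_{i=0}^{n-1}\frac{1}{3^{2i}}-p,\ \sum_{i=0}^{n-1}\frac{1}{3^{2i}}\,]$, has pre-image $[\,v_n^\ast-\sqrt p,\ v_n^\ast\,]$ of length exactly $\sqrt p$. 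The crucial feature, visible from $a_n^{-1}\big(\sum_{i=0}^{n-1}\tfrac{1}{3^{2i}}-p\big)=v_n^\ast-\sqrt p$, is that this length is \emph{independent of $n$}: the vertical scale $3^{-2(n-1)}$ is the square of the horizontal scale $3^{-(n-1)}$, so the constant in the square-root law is always $1$.

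First I would fix $m$ and set $\delta:=3^{-2(m-1)}$, which is no larger than the length $3^{-2(n-1)}$ of $a(D_n)$ for every $n\le m$. For each $n=1,\dots,m$ let $J_n$ be the open interval of length $\delta$ lying at the very top of $a(D_n)$, i.e. $J_n=\big(\sum_{i=0}^{n-1}\tfrac{1}{3^{2i}}-\delta,\ \sum_{i=0}^{n-1}\tfrac{1}{3^{2i}}\big)$, and put $\mathcal{O}^m:=\bigcup_{n=1}^{m}J_n$. Since the images $a(D_n)$ are disjoint consecutive intervals and each $J_n$ sits inside $a(D_n)$, the $J_n$ are pairwise disjoint; as $a$ is continuous and increasing, $\mathcal{O}^m$ is open and its pre-image is the disjoint union of the $a^{-1}(J_n)$.

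Next I would simply tally the two sides. On the one hand $|\mathcal{O}^m|=\sum_{n=1}^m|J_n|=m\delta$. On the other hand, by the square-root law each $|a^{-1}(J_n)|=\sqrt\delta$ independently of $n$, so $|a^{-1}(\mathcal{O}^m)|=\sum_{n=1}^m\sqrt\delta=m\sqrt\delta$. Therefore
$$
\frac{|a^{-1}(\mathcal{O}^m)|}{|\mathcal{O}^m|^{1/2}}=\frac{m\sqrt\delta}{(m\delta)^{1/2}}=\sqrt m\longrightarrow\infty,
$$
which is the desired conclusion.

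The only point requiring care---and the heart of the example---is the $n$-independence of the pre-image length $\sqrt\delta$; this is not an accident but the precise balancing of the two scales in the Cantor-type construction, and it is what makes $\nu=1/2$ borderline. Everything else is bookkeeping: checking that $\delta\le 3^{-2(m-1)}$ guarantees $J_n\subset a(D_n)$ for all $n\le m$, that disjointness lets the pre-image measures add, and that the common factor $\sqrt\delta$ cancels to leave the clean ratio $\sqrt m$. One could equally well let each $J_n$ have its own length $\delta_n\le 3^{-2(n-1)}$, but taking all lengths equal is what makes the numerator grow like $m\sqrt\delta$ while the denominator grows only like $\sqrt m\,\sqrt\delta$, producing the divergence.
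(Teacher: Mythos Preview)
Your proof is correct and is essentially identical to the paper's: you choose the same sets $\mathcal{O}^m=\bigcup_{n=1}^m J_n$ with each $J_n$ the interval of length $\delta=3^{-2(m-1)}$ at the top of $a(D_n)$, compute $|a^{-1}(J_n)|=\sqrt{\delta}$ independently of $n$, and obtain the same ratio $\sqrt{m}$. The only cosmetic difference is that you take open intervals and add explanatory commentary about why the square-root constant is independent of $n$.
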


\begin{proof}
Let
$$
\mathcal{O}^m=\cup_{n=1}^m I_n,
$$
where $I_n=\left[\sum_{i=0}^{n-1}\frac{1}{3^{2i}}-\frac{1}{3^{2(m-1)}},\sum_{i=0}^{n-1}\frac{1}{3^{2i}}\right]$ for all $1\leq n\leq m$.

So 
$$
|I_n|=\frac{1}{3^{2(m-1)}}\,\,\,\,\,\,\text{for all $1\leq n\leq m$,}
$$
and 
$$
|a^{-1}(I_n)|=|I_n|^{1/2}=\frac{1}{3^{m-1}}\,\,\,\,\,\,\text{for all $1\leq n\leq m$.}
$$

Therefore,
$$
\frac{|a^{-1}(\mathcal{O}^m)|}{|\mathcal{O}^m|^{1/2}}=\frac{\frac{m}{3^{m-1}}}{\left(\frac{m}{3^{2(m-1)}}\right)^{1/2}}=\sqrt{m}\rightarrow \infty \,\,\,\,\,\,\,\,\text{as $m\rightarrow \infty.$}
$$
\end{proof}
%%%%%%%%%%%%%%%%%%%%%%%%%%%%%%%%%%%%%%%%%%%%%%%%%
%%%%%%%%%%%%%%%%%%%%%%%%%%%%%%%%%%%%%%%%%%%%%%%%%

\end{document}